\newtheoremstyle{mystyle}               
{}                
{}                
{}        
{}                
{\bfseries}       
{.}      
{ }      
{}
\newtheorem{theorem}{Theorem}
\newtheorem{corollary}[theorem]{Corollary}
\newtheorem{lemma}[theorem]{Lemma}
\theoremstyle{mystyle}
\newtheorem{example}[theorem]{Example}
\newcommand{\cC}{{\mathcal C}}
\newcommand{\cT}{{\mathcal T}}
\newcommand{\bR}{{\mathbb R}}
\DeclareMathOperator{\dif}{d}
\begin{document}

\title{Existence and multiplicity results for some generalized Hammerstein
equations with a parameter \thanks{%
First author was partially supported by Xunta de Galicia (Spain), project
EM2014/032, AIE Spain and FEDER, grants MTM2013-43014-P, MTM2016-75140-P,
and FPU scholarship, Ministerio de Educaci\'{o}n, Cultura y Deporte, Spain.}}
\date{}
\author{Luc{\' i}a L\'opez-Somoza$^1$\thanks{Corresponding author} and Feliz Minh\'os$^{2,3}$ \\
$^1$ Instituto de Ma\-te\-m\'a\-ti\-cas, Facultade de Matem\'aticas, \\
Universidade de Santiago de Com\-pos\-te\-la, 15782 Santiago de Compostela,\\
Galicia, Spain.\\
lucia.lopez.somoza@usc.es\\
$^2$ Departamento de Matem\'atica, Escola de Ci\^encias e Tecnologia,\\
$^3$ Centro de Investiga\c{c}\~ao em Matem\'atica e Aplica\c{c}\~oes (CIMA),%
\\
Instituto de Investiga\c{c}\~ao e Forma\c{c}\~ao Avan\c{c}ada,\\
Universidade de \'Evora, \'Evora, Portugal.\\
fminhos@uevora.pt}
\maketitle

\textbf{Keywords:} Hammerstein equations, nonlinear boundary value problems, parameter dependence, degree theory, fixed points in cones.
\textbf{MSC:} 34B08, 34B10, 34B15, 34B18, 34B27.

\begin{abstract}
This paper considers the existence and multiplicity of fixed points for the
integral operator 
\begin{equation*}
{\mathcal{T}}u(t)=\lambda \,\int_{0}^{T}k(t,s)\,f(s,u(s),u^{\prime
}(s),\dots ,u^{(m)}(s))\,\dif s,\quad t\in \lbrack 0,T]\equiv I,
\end{equation*}%
where $\lambda >0$ is a positive parameter, $k:I\times I\rightarrow \mathbb{R%
}$ is a kernel function such that $k\in W^{m,1}\left( I\times I\right) $, $m$
is a positive integer with $m\geq 1$, and $f:I\times 
\mathbb{R}
^{m+1}\rightarrow \lbrack 0,+\infty \lbrack $ is a $L^{1}$-Carath\'{e}odory
function.

The existence of solutions for these Hammerstein equations is obtained by
fixed point index theory on new type of cones. Therefore some assumptions
must hold only for, at least, one of the derivatives of the kernel or, even,
for the kernel, on a subset of the domain. Assuming some asymptotic
conditions on the nonlinearity $f$, we get sufficient conditions for
multiplicity of solutions.

Two examples will illustrate the potentialities of the main results, namely
the fact that the kernel function and/or some derivatives may only be
positive on some subintervals, which can degenerate to a point. Moreover, an
application of our method to general Lidstone problems improves the existent
results on the literature in this field.
\end{abstract}

\section{Introduction}

In this work we will study the existence and multiplicity of fixed points of
the integral operator 
\begin{equation}
{\mathcal{T}}u(t)=\lambda \,\int_{0}^{T}k(t,s)\,f(s,u(s),u^{\prime
}(s),\dots ,u^{(m)}(s))\dif s,\quad t\in \lbrack 0,T]\equiv I,  \label{e-Int}
\end{equation}%
where $\lambda >0$ is a positive parameter, $k:I\times I\rightarrow \mathbb{R%
}$ is a kernel function such that $k\in W^{m,1}\left( I\times I\right) $, $m$
is a positive integer with $m\geq 1$, and $f:I\times 
\mathbb{R}
^{m+1}\rightarrow \lbrack 0,+\infty \lbrack $ is a $L^{1}$-Carath\'{e}odory
function.

The solvability of these type of integral equations, known as Hammerstein
equations (see \cite{Hamm}), has been considered by many authors. In fact
they have become a generalization of differential equations and boundary
value problems and a main field for applications of methods and techniques
of nonlinear analysis, as it can be seen, for instance, in \cite{Ben,
DF+GI+OR, gijwjmaa, GuoSunZhao, InfPietTojo, InfWebb, JiangZhai, MinSousa}.

In \cite{AC+LS+FM}, the authors consider a third order three-point boundary
value problem, whose solutions are the fixed points of the integral operator 
\begin{equation*}
Tu(t)=\lambda \,\int_{0}^{1}G(t,s)\,f(s,u(s),u^{\prime }(s),u^{\prime \prime
}(s))\,\dif s,\quad t\in \lbrack 0,1],
\end{equation*}%
where $G(t,s)$ is an explicit Green's function, verifying some adequate
properties such that $G(t,s)$ and $\frac{\partial \,G}{\partial \,t}(t,s)$
are bounded and non-negative in the square $[0,1]\times \lbrack 0,1],$ but $%
\frac{\partial ^{2}\,G}{\partial \,t^{2}}(t,s)$ could change sign, being
non-negative in a subset of the square.

In \cite{GF+LK+FM}, it is studied a generalized Hammerstein equation%
\begin{equation}
u(t)=\int_{0}^{1}k(t,s)\text{ }g(s)\text{ }f(s,u(s),u^{\prime }(s),\dots
,u^{(m)}(s))\,\dif s,  \label{Eq}
\end{equation}%
with $k:[0,1]^{2}\rightarrow \mathbb{R}$ a kernel function where, in short, $%
k\in W^{m,1}\left( [0,1]^{2}\right) $, $m\geq 1$ is integer, $%
g:[0,1]\rightarrow \lbrack 0,\infty )$ in almost everywhere $t\in \lbrack
0,1]$ non-negative, and $f:[0,1]\times \mathbb{R}^{m+1}\rightarrow \lbrack
0,\infty )$ is a $L^{\infty }-$Carath\'{e}odory function. Moreover, the
kernel $k(t,s)$ and its derivatives $\frac{\partial ^{i}k}{\partial t^{i}}%
(t,s),$ for $i=1,\dots ,m,$ are bounded and non-negative on the square $%
[0,1]\times \lbrack 0,1].$

Our work generalize the existing results in the literature introducing a new
type of cone, 
\begin{equation*}
K=\left\{ 
\begin{split}
u\in \mathcal{C}^{m}(I,\mathbb{R}):\ & u^{(i)}(t)\geq 0,\,t\in \lbrack
m_{i},n_{i}],\,i\in J; \\[3pt]
& \displaystyle\min_{t\in \left[ a_{j},b_{j}\right] }u^{(j)}(t)\geq \xi
_{j}\ \Vert u^{(j)}\Vert _{\left[ c_{j},d_{j}\right] },j\in J_{1}
\end{split}%
\right\} ,
\end{equation*}
where the non-negativeness of the functions may happen only on a
subinterval, possibly degenerate, that is reduced to a point, and, as $%
J_{1}\subset J$, $J_{1}\neq \varnothing ,$ the second property can hold,
locally, only for a restrict number of derivatives, including the function
itself. In this way, it is not required, as it was usual, that $k(t,s)$ and $%
\frac{\partial ^{i}k}{\partial t^{i}}(t,s)$ have constant sign on the
square. Another important novelty is given by $(H_{4})$, where the bounds
must hold only for, at least, one of the derivatives of the kernel or, even,
for the kernel, on a subset of the domain. Assuming some asymptotic
conditions on the nonlinearity $f$\ and applying index theory, we obtain
sufficient conditions for multiplicity of solutions, more precisely, for the
existence of two or more solutions.

Moreover, the application in last section contains new sufficient conditions
for the solvability of $2n$-$th$ order Lidstone problems. In fact, our
method allows that the nonlinearities may depend on derivatives of even and
odd order, which is new in the literature on this type of problems, as it
can be seen, for instance, in \cite{Davis, Marius, Wang, Z+L}. In this way,
our results fill some gaps and improve the study of Lidstone and
complementary Lidstone problems.

The paper is organized in the following way: Section 2 contains the main
assumptions, the definition of the new cone and some properties on the
integral operator. In section 3, the existence results are obtained with
several asymptotic assumptions on $f$ of the sublinear or superlinear type,
near $0$ or $+\infty .$ Section 4 presents existence and multiplicity
results applying fixed point index theory. Section 5 has two examples to
illustrate our main results and, moreover, to emphasize the importance that $%
(H_{4})$ holds only for some derivatives and that the subsets could be
reduced to a point. Last section contains an application to $2n$-$th$ order
Lidstone problems, which allows the dependence of the nonlineariry on odd
and even derivatives.

\section{Hypothesis and auxiliary results}

Let's consider $E=\mathcal{C}^m(I,\,\mathbb{R})$ equipped with the norm 
\begin{equation*}
\|u\|=\max \{\|u^{(i)}\|_{\infty}, \ i\in J \},
\end{equation*}
where $\|v\|_{\infty}=\underset{t\in I}{\sup } \, |v(t)|$ and $%
J\equiv\{0,1,\dots ,m\}$. It is very well-known that $(E,\,\|\cdot\|)$ is a
Banach space.

Throughout the paper we will make the following assumptions:

\begin{itemize}
\item[$(H_{1})$] The kernel function $k\colon I\times I\rightarrow {\mathbb{R%
}}$ is such that $k\in W^{m,1}(I\times I)$, with $m\geq 1$. Moreover, for $i=0,\dots,m-1$, it holds that for every $\varepsilon>0$ and every fixed $\tau \in I$, there exists some $\delta>0$ such that $|t-\tau|<\delta$ implies that 
\begin{equation*}
\left\vert \frac{\partial ^{i}k}{\partial
\,t^{i}}(t,s)-\frac{\partial ^{i}k}{\partial \,t^{i}}(\tau ,s)\right\vert
<\varepsilon \ \text{for a.\,e. } s\in I.
\end{equation*}

Finally, for the $m$-th derivative of the kernel, it holds that 
for every $\varepsilon>0$ and every fixed $\tau \in I$, there exist a set $Z_\tau \in I$ with measure equal to zero and some $\delta>0$ such that $|t-\tau|<\delta$ implies that 
\begin{equation*}
\left\vert \frac{\partial ^{m}k}{\partial
	\,t^{m}}(t,s)-\frac{\partial ^{m}k}{\partial \,t^{m}}(\tau ,s)\right\vert
<\varepsilon, \  \forall\, s\in I\setminus Z_\tau \text{ such that } s<\min\{t,\tau\} \text{ or } s>\max\{t,\tau\}.
\end{equation*}

\item[$(H_{2})$] For each $i\in J_0 \subset J$, $J_0\neq \varnothing$, there
exists a subinterval $[m_{i},n_{i}]$ such that 
\begin{equation*}
\frac{\partial ^{i}k}{\partial t^{i}}(t,s)\geq 0\ \text{ for all }t\in
\lbrack m_{i},n_{i}],\ s\in I.
\end{equation*}
It is possible that the interval is degenerated, that is, $m_i=n_i$.

\item[$(H_{3})$] For $i\in J$, there are positive functions $h_{i}\in
L^{1}(I)$ such that 
\begin{equation*}
\left\vert \frac{\partial ^{i}k}{\partial t^{i}}(t,s)\right\vert \leq
h_{i}(s)\ \text{ for }t\in I \text{ and a.\,e. } s\in I.
\end{equation*}

\item[$(H_{4})$] For each $j\in J_{1}\subset J_0$, $J_{1}\neq \varnothing ,$
there exist subintervals $[a_{j},b_{j}]$ and $[c_{j},d_{j}]$, positive
functions $\phi _{j}\colon I\rightarrow \lbrack 0,\infty )$ and constants $%
\xi _{j}\in (0,1)$ such that 
\begin{equation*}
\left\vert \frac{\partial ^{j}k}{\partial t^{j}}(t,s)\right\vert \leq \phi
_{j}(s)\ \text{ for }t\in \lbrack c_{j},d_{j}] \text{ and a.\,e. } s\in I,
\end{equation*}%
and 
\begin{equation*}
\frac{\partial ^{j}k}{\partial t^{j}}(t,s)\geq \xi _{j}\,\phi _{j}(s)\ \text{
for }t\in \lbrack a_{j},b_{j}] \text{ and a.\,e. } s\in I.
\end{equation*}

Moreover, $\phi _{j}\in L^{1}(I)$ satisfies 
\begin{equation*}
\int_{a_{j}}^{b_{j}}\phi _{j}(s)\,\dif s>0.
\end{equation*}

\item[$(H_{5})$] There exists $i_{0}\in J_0$ such that either $%
[c_{i_{0}},d_{i_{0}}]\equiv I$ or $[m_{i_{0}},n_{i_{0}}]\equiv I$ and,
moreover, $\{0,1,\dots,i_0\}\subset J_0$.

\item[$(H_{6})$] The nonlinearity $f\colon I\times \mathbb{R}%
^{m+1}\rightarrow \lbrack 0,\infty )$ satisfies $L^{1}$-Carath\'{e}odory
conditions, that is,

\begin{itemize}
\item $f(\cdot ,x_{0},\dots ,x_{m})$ is measurable for each $(x_{0},\dots
,x_{m})$ fixed.

\item $f(t,\cdot ,\cdots ,\cdot )$ is continuous for a.\thinspace e. $t\in I$%
.

\item For each $r>0$ there exists $\varphi _{r}\in L^{1}(I)$ such that 
\begin{equation*}
f(t,x_{0},\dots ,x_{m})\leq \varphi _{r}(t),\quad \forall \,(x_{0},\dots
,x_{m})\in (-r,r)^{m+1},\ \text{ a.\thinspace e.}\ t\in I.
\end{equation*}
\end{itemize}

\item[$(H_{7})$] Functions $h_i$ defined in $(H_3)$ and $\varphi_r$ defined
in $(H_6)$ are such that $h_i \, \varphi_r \in L^1(I)$ for every $i\in J$
and $r>0$.
\end{itemize}

We will look for fixed points of operator $\mathcal{T}$ on a suitable cone
on the Banach space $E$. We recall that a cone $K$ is a closed and convex
subset of $E$ satisfying the two following properties:

\begin{itemize}
\item If $x \in K$, then $\lambda\,x\in K$ for all $\lambda\ge 0$.

\item $K\cap(-K)=\{0\}$.
\end{itemize}

Now, taking into account the properties satisfied by the kernel $k$, we
define 
\begin{equation*}
K=\left\{ 
\begin{split}
u\in \mathcal{C}^{m}(I,\mathbb{R}):\ & u^{(i)}(t)\geq 0,\ t\in \lbrack
m_{i},n_{i}],\ i\in J_0; \\[3pt]
& \displaystyle\min_{t\in \left[ a_{j},b_{j}\right] }u^{(j)}(t)\geq \xi
_{j}\ \Vert u^{(j)}\Vert _{\left[ c_{j},d_{j}\right] }, \ j\in J_{1}
\end{split}
\right\},
\end{equation*}%
where 
\begin{equation*}
\left\Vert u^{(j)}\right\Vert _{\left[ c_{j},d_{j}\right] }\colon
=\max_{t\in \left[ c_{j},d_{j}\right] }\left\vert u^{(j)}(t)\right\vert .
\end{equation*}

\begin{lemma}
Hypothesis $(H_{5})$ warrants that $K$ is a cone in $E$.
\end{lemma}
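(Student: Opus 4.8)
The plan is to verify directly that $K$ satisfies the three defining properties of a cone, namely that it is closed, that it is convex and closed under multiplication by non-negative scalars, and that $K \cap (-K) = \{0\}$. The first two properties are essentially formal: $K$ is an intersection of sets defined by inequalities that are preserved under convex combinations and multiplication by $\lambda \ge 0$, and closedness follows because each constraint $u^{(i)}(t)\ge 0$ and $\min_{[a_j,b_j]} u^{(j)} \ge \xi_j \|u^{(j)}\|_{[c_j,d_j]}$ is a closed condition with respect to the $\mathcal{C}^m$-norm (uniform convergence of derivatives up to order $m$ passes to the limit in each inequality, and in the $\min$/$\max$ over compact subintervals). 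So I would dispatch these quickly and concentrate on the only place where the hypothesis $(H_5)$ actually does work, namely showing $K \cap (-K) = \{0\}$.

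For that step, suppose $u \in K \cap (-K)$, so both $u$ and $-u$ lie in $K$. The first family of constraints then forces $u^{(i)}(t) = 0$ for all $t \in [m_i,n_i]$ and all $i \in J_0$; likewise the second family forces $\|u^{(j)}\|_{[c_j,d_j]} = 0$, i.e. $u^{(j)} \equiv 0$ on $[c_j,d_j]$, for all $j \in J_1$. The issue is that these intervals may be tiny (even single points), and $J_0$ need not be all of $J$, so a priori this does not force $u \equiv 0$ on all of $I$ together with all its derivatives. This is where $(H_5)$ enters: there is an index $i_0 \in J_0$ with $\{0,1,\dots,i_0\}\subset J_0$ and such that either $[c_{i_0},d_{i_0}] = I$ or $[m_{i_0},n_{i_0}] = I$. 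In the first case (if $i_0 \in J_1$) we get $u^{(i_0)} \equiv 0$ on all of $I$ immediately; in the second case we get $u^{(i_0)} \equiv 0$ on all of $I$ from the first constraint family with $u$ and $-u$. Either way, $u^{(i_0)} \equiv 0$ on $I$.

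The remaining point is to climb down from $u^{(i_0)} \equiv 0$ to $u \equiv 0$ as an element of $E$, i.e. to conclude $\|u\| = 0$. Here I would use the other half of $(H_5)$, namely $\{0,1,\dots,i_0\} \subset J_0$. For each $i \in \{0,1,\dots,i_0-1\}$ there is a (possibly degenerate) interval $[m_i,n_i]$ on which $u^{(i)} \ge 0$ and $-u^{(i)} \ge 0$, hence $u^{(i)}$ vanishes at some point $t_i \in [m_i,n_i]$. Now argue by downward induction on $i$ from $i_0$: knowing $u^{(i_0)} \equiv 0$, integrate to get that $u^{(i_0-1)}$ is constant on $I$, and since it vanishes at $t_{i_0-1}$ it is identically $0$; repeating, $u^{(i)} \equiv 0$ for every $i \le i_0$, in particular $u \equiv 0$. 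Finally, since $u \equiv 0$ all its derivatives up to order $m$ vanish as well, so $\|u\| = 0$ and $u = 0$ in $E$. That gives $K \cap (-K) = \{0\}$ and completes the proof. The main obstacle is not any one computation but making sure the bookkeeping with the (possibly single-point) intervals and the index set $J_0$ is airtight; the degenerate case $m_i = n_i$ is actually harmless here because a single vanishing point of a constant function is enough.
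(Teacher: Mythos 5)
Your proof is correct and follows essentially the same route as the paper's: the scalar and convexity properties are dispatched as formal, and $K\cap(-K)=\{0\}$ is obtained by splitting on the two alternatives in $(H_5)$ to get $u^{(i_0)}\equiv 0$ on $I$, then descending through $i_0-1,\dots,0$ using that each $u^{(i)}$ is constant and vanishes somewhere on $[m_i,n_i]$. If anything, your descent step is stated more carefully than the paper's, which invokes the interval $[c_{i_0-1},d_{i_0-1}]$ (only defined for indices in $J_1$) where the condition $\{0,\dots,i_0\}\subset J_0$ and the intervals $[m_i,n_i]$ are what is actually needed, exactly as you use them.
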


\begin{proof}
We need to verify that $K$ satisfies the two properties which characterize
cones in a Banach space. First of all, from the definition of $K$, it is
trivial to check that if $x \in K$, then $\lambda \, x\in K$ for all $%
\lambda\ge 0$.

Now, to prove that $K\cap (-K)=\{0\}$, we will distinguish between two
different cases:

\begin{itemize}
\item[(I)] There exists $i_0\in J_0$ such that $[m_{i_0},n_{i_0}]\equiv I$.

Suppose that $u,\,-u\in K$. Then $u^{(i_{0})}(t)\geq 0$ and $%
-u^{(i_{0})}(t)\geq 0$ for all $t\in I$, which implies that $%
u^{(i_{0})}\equiv 0$ on $I$. If $i_{0}\geq 1,$ $u^{(i_{0}-1)}$ is constant
on $I$.

Now, we have that $u^{(i_{0}-1)}(t)\geq 0$ and $-u^{(i_{0}-1)}(t)\geq 0$ for
all $t\in \lbrack c_{i_{0}-1},d_{i_{0}-1}]$, that is $u^{(i_{0}-1)}\equiv 0$
on $[c_{i_{0}-1},d_{i_{0}-1}]$. Then, since $u^{(i_{0}-1)}$ is constant on $%
I $, we deduce that $u^{(i_{0}-1)}\equiv 0$ on $I$.

Using the same argument repeatedly we conclude that $u\equiv 0$ on $I$. In
this way, we have proved that $K\cap (-K)=\{0\}$.

\item[(II)] There exists $i_{0}\in J_0$ such that $[c_{i_{0}},d_{i_{0}}]\equiv
I$.

Suppose again that $u,\,-u\in K$. Then, from the fact that 
\begin{equation*}
\min_{t\in \left[ a_{i_{0}},b_{i_{0}}\right] }u^{(i_{0})}(t)\geq \xi
_{i_{0}}\ \Vert u^{(i_{0})}\Vert _{I}\ \text{ and }\ \min_{t\in \left[
a_{i_{0}},b_{i_{0}}\right] }\left( -u^{(i_{0})}(t)\right) \geq \xi _{i_{0}}\
\Vert u^{(i_{0})}\Vert _{I},
\end{equation*}%
it is deduced that $\Vert u^{(i_{0})}\Vert _{I}=0$, which implies that $%
u^{(i_{0})}\equiv 0$ on $I$. Now, following the same arguments than in
Case (I), we deduce the result.
\end{itemize}
\end{proof}

In next section, considering some additional properties on the function $f$,
we will ensure the existence of fixed points of operator ${\mathcal{T}}$.
However, before doing that, we obtain some previous technical results.

\begin{lemma}
\label{L:complet_cont} If $(H_{1})-(H_{7})$ hold, then ${\mathcal{T}}\colon
K\rightarrow K$ is a completely continuous operator.
\end{lemma}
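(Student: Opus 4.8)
The plan is to verify, in this order, four properties of $\mathcal{T}$: (a) for every $u$ in a bounded subset of $E$ the function $\mathcal{T}u$ belongs to $\mathcal{C}^{m}(I,\mathbb{R})$ and
\[
(\mathcal{T}u)^{(i)}(t)=\lambda\int_{0}^{T}\frac{\partial^{i}k}{\partial t^{i}}(t,s)\,f\bigl(s,u(s),u'(s),\dots,u^{(m)}(s)\bigr)\,\dif s,\qquad i\in J;
\]
(b) $\mathcal{T}(K)\subseteq K$; (c) $\mathcal{T}$ is continuous; (d) $\mathcal{T}$ maps bounded sets into relatively compact sets. Properties (c) and (d) together yield the complete continuity.

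For (a), fix $r>0$ and $u$ with $\|u\|<r$, and abbreviate $g(s):=f(s,u(s),\dots,u^{(m)}(s))$, so that $0\le g\le\varphi_{r}$ a.e.\ by $(H_{6})$ and hence $g\in L^{1}(I)$. For $i\in J$ put $w_{i}(t):=\lambda\int_{0}^{T}\frac{\partial^{i}k}{\partial t^{i}}(t,s)\,g(s)\,\dif s$, which is well defined because $\bigl|\frac{\partial^{i}k}{\partial t^{i}}(t,s)\,g(s)\bigr|\le h_{i}(s)\,\varphi_{r}(s)\in L^{1}(I)$ by $(H_{3})$ and $(H_{7})$. I would then show that each $w_{i}$ is continuous and that $w_{i}'=w_{i+1}$ for $i\le m-1$; since $w_{0}=\mathcal{T}u$ this gives $\mathcal{T}u\in\mathcal{C}^{m}$ with $(\mathcal{T}u)^{(i)}=w_{i}$. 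Continuity of $w_{i}$ for $i\le m-1$ is immediate from the modulus of continuity in $t$ supplied by $(H_{1})$, which is uniform in $s$. The identity $w_{i}'=w_{i+1}$ follows by writing $\frac{\partial^{i}k}{\partial t^{i}}(t,s)-\frac{\partial^{i}k}{\partial t^{i}}(t_{0},s)=\int_{t_{0}}^{t}\frac{\partial^{i+1}k}{\partial t^{i+1}}(\sigma,s)\,\dif\sigma$ for a.e.\ $s$ (absolute continuity in $t$ for a.e.\ $s$, a consequence of $k\in W^{m,1}(I\times I)$), interchanging the order of integration (legitimate since $h_{i+1}\varphi_{r}\in L^{1}(I)$ by $(H_{7})$), and differentiating, using the continuity of $w_{i+1}$. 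The only delicate point is the continuity of $w_{m}$, since $\frac{\partial^{m}k}{\partial t^{m}}$ may jump across $t=s$: fixing $\tau$ and $\varepsilon>0$, I take the null set $Z_{\tau}$ and $\delta>0$ from the last part of $(H_{1})$ and, for $|t-\tau|<\delta$, split $\int_{0}^{T}\bigl|\frac{\partial^{m}k}{\partial t^{m}}(t,s)-\frac{\partial^{m}k}{\partial t^{m}}(\tau,s)\bigr|\,g(s)\,\dif s$ into the integral over $s\notin[\min\{t,\tau\},\max\{t,\tau\}]$ (bounded by $\lambda\,\varepsilon\,\|\varphi_{r}\|_{L^{1}(I)}$) and the integral over $[\min\{t,\tau\},\max\{t,\tau\}]$ (bounded by $2\lambda\int_{[\min\{t,\tau\},\max\{t,\tau\}]}h_{m}(s)\,\varphi_{r}(s)\,\dif s$, which tends to $0$ as $t\to\tau$ by absolute continuity of the integral of $h_{m}\varphi_{r}\in L^{1}(I)$).

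For (b) only $f\ge0$ together with $(H_{2})$ and $(H_{4})$ is needed: if $u\in K$, then for $i\in J_{0}$ and $t\in[m_{i},n_{i}]$ the sign condition in $(H_{2})$ gives $(\mathcal{T}u)^{(i)}(t)\ge0$, while for $j\in J_{1}$ the two-sided bound in $(H_{4})$ yields $(\mathcal{T}u)^{(j)}(t)\ge\xi_{j}\,\lambda\int_{0}^{T}\phi_{j}(s)\,g(s)\,\dif s$ for $t\in[a_{j},b_{j}]$ and $|(\mathcal{T}u)^{(j)}(t)|\le\lambda\int_{0}^{T}\phi_{j}(s)\,g(s)\,\dif s$ for $t\in[c_{j},d_{j}]$, whence $\min_{[a_{j},b_{j}]}(\mathcal{T}u)^{(j)}\ge\xi_{j}\,\|(\mathcal{T}u)^{(j)}\|_{[c_{j},d_{j}]}$ and $\mathcal{T}u\in K$. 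For (c), if $u_{n}\to u$ in $E$ I choose a common bound $r$, note that $f(s,u_{n}(s),\dots)\to f(s,u(s),\dots)$ a.e.\ by $(H_{6})$ with all terms dominated by $\varphi_{r}$, and obtain $\|(\mathcal{T}u_{n})^{(i)}-(\mathcal{T}u)^{(i)}\|_{\infty}\le\lambda\int_{0}^{T}h_{i}(s)\,|f(s,u_{n}(s),\dots)-f(s,u(s),\dots)|\,\dif s\to0$ for each $i\in J$ by dominated convergence with dominating function $2h_{i}\varphi_{r}$ (using $(H_{7})$). For (d), given a bounded $B\subseteq K$ with $\|u\|<r$ on $B$, uniform boundedness of $\{(\mathcal{T}u)^{(i)}:u\in B\}$ follows from $|(\mathcal{T}u)^{(i)}(t)|\le\lambda\int_{0}^{T}h_{i}(s)\,\varphi_{r}(s)\,\dif s$, and equicontinuity of each such family follows from $(H_{1})$ exactly as in (a) (the relevant $\delta$ not depending on $u$, and pointwise equicontinuity on the compact $I$ being what Arzel\`{a}--Ascoli requires). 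Applying Arzel\`{a}--Ascoli successively for $i=0,1,\dots,m$ extracts from any sequence in $\mathcal{T}(B)$ a subsequence converging together with all its derivatives up to order $m$, i.e.\ in the norm of $E$, so $\mathcal{T}(B)$ is relatively compact. I expect the main obstacle to be step (a) at the top order: passing from $(\mathcal{T}u)^{(m-1)}$ to $(\mathcal{T}u)^{(m)}$ through the $W^{m,1}$-structure of $k$, and establishing the continuity and equicontinuity of $(\mathcal{T}u)^{(m)}$ despite the possible discontinuity of $\frac{\partial^{m}k}{\partial t^{m}}$ along the diagonal — which is exactly what the second half of $(H_{1})$ is tailored to handle.
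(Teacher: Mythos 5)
Your proposal is correct and follows essentially the same route as the paper: $\mathcal{T}(K)\subseteq K$ via the sign and $\xi_j$-estimates from $(H_2)$ and $(H_4)$, uniform boundedness via $(H_3)$ and $(H_7)$, equicontinuity via $(H_1)$ with the three-piece splitting of the $s$-integral around the diagonal for the $m$-th derivative, and Arzel\`{a}--Ascoli. The only difference is that you spell out the two parts the paper dismisses as ``standard techniques'' (the $\mathcal{C}^m$-regularity of $\mathcal{T}u$ via $w_i'=w_{i+1}$ and the continuity of $\mathcal{T}$ via dominated convergence), and those details are sound.
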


\begin{proof}
We divide the proof into several steps.

\vspace*{0.2cm} \textbf{Step 1.} $\cT$ is well defined in $K$.

\vspace*{0.1cm} Let $u\in K$. The proof that $\cT u\in \cC^m(I,\bR)$ follow standard techniques and so we omit it.

%
%

%
We will prove that $\cT u\in K$.

It is obvious that, for $i\in J_0$, $({\mathcal{T}}u)^{(i)}(t)\geq 0$ for all $t\in \lbrack m_{i},n_{i}]$.

Moreover, for $j\in J_{1}$ and $t\in \lbrack c_{j},d_{j}]$, we have that 
\begin{eqnarray*}
\left\vert ({\mathcal{T}}u)^{(j)}(t)\right\vert &\leq &\lambda
\,\int_{0}^{T}\left\vert \frac{\partial ^{j}k}{\partial \,t^{j}}%
(t,s)\right\vert \,f(s,u(s),\dots ,u^{(m)}(s))\,\dif s \\
&\leq &\lambda \,\int_{0}^{T}\phi _{j}(s)\,f(s,u(s),\dots ,u^{(m)}(s))\,\dif s,
\end{eqnarray*}%
and, taking the supremum for $t\in \lbrack c_{j},d_{j}]$, we deduce that 
\begin{equation*}
\left\Vert ({\mathcal{T}}u)^{(j)}\right\Vert _{[c_{j},d_{j}]}\leq \lambda
\,\int_{0}^{T}\phi _{j}(s)\,f(s,u(s),\dots ,u^{(m)}(s))\,\dif s.
\end{equation*}%
On the other hand, for $t\in \left[ a_{j},b_{j}\right] $, we have 
\begin{equation*}
\begin{split}
({\mathcal{T}}u)^{(j)}(t)& =\lambda \,\int_{0}^{T}\frac{\partial ^{j}k}{%
\partial \,t^{j}}(t,s)\,f(s,u(s),\dots ,u^{(m)}(s))\,\dif s \\
& \geq \lambda \,\int_{0}^{T}\xi _{j}\,\phi _{j}(s)\,f(s,u(s),\dots
,u^{(m)}(s))\,\dif s\geq \xi _{j}\,\left\Vert ({\mathcal{T}}u)^{(j)}\right\Vert
_{[c_{j},d_{j}]}
\end{split}%
\end{equation*}%
and we deduce that 
\begin{equation*}
\min_{t\in \left[ a_{j},b_{j}\right] }({\mathcal{T}}u)^{(j)}(t)\geq \xi
_{j}\,\left\Vert ({\mathcal{T}}u)^{(j)}\right\Vert _{[c_{j},d_{j}]}
\end{equation*}%
for $j\in J_{1}$.

Therefore, we can conclude that ${\mathcal{T}}u\in K$.

\vspace*{0.2cm} \textbf{Step 2.} $\cT$ is continuous in $\cC^m(I,\bR)$.

This part also follows standard techniques.

\vspace*{0.2cm} \textbf{Step 3.} $\cT$ is a compact operator.

\vspace*{0.1cm} Let's consider 
\begin{equation*}
B=\{u\in E;\ \Vert u\Vert \leq r\}.
\end{equation*}

First, we will prove that $T(B)$ is uniformly bounded in $\mathcal{C}^{m}(I)$%
.

We find the following bounds for $u\in B$ and $i\in J$: 
\begin{equation*}
\begin{split}
\left\Vert ({\mathcal{T}}u)^{(i)}\right\Vert _{\infty }& =\sup_{t\in
I}\left\vert \lambda \,\int_{0}^{T}\frac{\partial ^{i}\,k}{\partial \,t^{i}}%
(t,s)\,f(s,u(s),\dots ,u^{(m)}(s))\,\dif s\right\vert \\[2pt]
& \leq \lambda \,\int_{0}^{T}h_{i}(s)\,f(s,u(s),\dots ,u^{(m)}(s))\,\dif s \leq \lambda \,\int_{0}^{T}h_{i}(s)\,\varphi _{r}(s)\,\dif s:={M}_{i}, 
\end{split}%
\end{equation*}%
with ${M}_{i}>0$. Therefore, 
\begin{equation*}
\Vert {\mathcal{T}}u\Vert \leq \max \{{M}_{i}:\ i\in J\},\quad \forall \,u\in B.
\end{equation*}

\vspace*{0.2cm} Now, we will prove that ${\mathcal{T}}(B)$ is equicontinuous
in $\mathcal{C}^{m}(I)$. Let $t_{2}\in I$ be fixed. Then, for every $\varepsilon>0$, take $\delta>0$ given in $(H_1)$ and for $i=0,\dots,m-1$, it holds that $|t_1-t_2|<\delta$ implies that 
\begin{equation*}\begin{split}
\left\vert ({\mathcal{T}}u)^{(i)}(t_{1})-({\mathcal{T}}u)^{(i)}(t_{2})\right\vert & \leq \lambda \int_{0}^{T}\left\vert \frac{\partial ^{i}\,k}{\partial
	\,t^{i}}(t_{1},s)-\frac{\partial ^{i}\,k}{\partial \,t^{i}}%
(t_{2},s)\right\vert f(s,u(s),\dots ,u^{(m)}(s))\,\dif s \\[2pt]
& \leq \lambda \int_{0}^{T}\left\vert \frac{\partial ^{i}\,k}{\partial \,t^{i}} (t_{1},s) -\frac{\partial^{i}\,k}{\partial \,t^{i}}
(t_{2},s)\right\vert \,\varphi _{r}(s)\,\dif s \le \varepsilon\,\lambda \int_{0}^{T} \varphi _{r}(s)\,\dif s,
\end{split}\end{equation*}
and, since $\varphi_r\in L^1(I)$, it is clear that there exists a positive constant $\kappa_1$ such that 
\[\left\vert ({\mathcal{T}}u)^{(i)}(t_{1})-({\mathcal{T}}u)^{(i)}(t_{2})\right\vert < \kappa_1 \, \varepsilon \]
for all $u\in B$. 

On the other hand, for the $m$-th derivative, for every $\varepsilon>0$, take $\delta>0$ given in $(H_1)$ and $|t_1-t_2|<\delta$, $t_1<t_2$, implies that 
\begin{equation*}\begin{split}
\left\vert ({\mathcal{T}}u)^{(m)}(t_{1})-({\mathcal{T}}u)^{(m)}(t_{2})\right\vert  \leq & \, \lambda \int_{0}^{T}\left\vert \frac{\partial ^{m}\,k}{\partial
	\,t^{m}}(t_{1},s)-\frac{\partial ^{m}\,k}{\partial \,t^{m}}%
(t_{2},s)\right\vert f(s,u(s),\dots ,u^{(m)}(s))\,\dif s \\[2pt]
\leq & \, \lambda \int_{0}^{T}\left\vert \frac{\partial ^{m}\,k}{\partial \,t^{m}} (t_{1},s) -\frac{\partial^{m}\,k}{\partial \,t^{m}}
(t_{2},s)\right\vert \,\varphi _{r}(s)\,\dif s \\[2pt]
= & \, \lambda \int_{0}^{t_1}\left\vert \frac{\partial ^{m}\,k}{\partial \,t^{m}} (t_{1},s) -\frac{\partial^{m}\,k}{\partial \,t^{m}}
(t_{2},s)\right\vert \,\varphi _{r}(s)\,\dif s \\[2pt]
& \, + \lambda \int_{t_1}^{t_2}\left\vert \frac{\partial ^{m}\,k}{\partial \,t^{m}} (t_{1},s) -\frac{\partial^{m}\,k}{\partial \,t^{m}}
(t_{2},s)\right\vert \,\varphi _{r}(s)\,\dif s \\[2pt]
& \, + \lambda \int_{t_2}^{T}\left\vert \frac{\partial ^{m}\,k}{\partial \,t^{m}} (t_{1},s) -\frac{\partial^{m}\,k}{\partial \,t^{m}}
(t_{2},s)\right\vert \,\varphi _{r}(s)\,\dif s.
\end{split}\end{equation*}
From $(H_1)$, it is clear that first and third integrals in last term of previous expression can be arbitrarily small when $|t_1-t_2|<\delta$. Moreover, $\left\vert \frac{\partial ^{m}\,k}{\partial \,t^{m}} (t_{1},\cdot) -\frac{\partial^{m}\,k}{\partial \,t^{m}}
(t_{2},\cdot)\right\vert \,\varphi _{r}(\cdot) \in L^1[t_1,t_2]$ and so there exists some $\delta'>0$ such that
\[\lambda \int_{t_1}^{t_2}\left\vert \frac{\partial ^{m}\,k}{\partial \,t^{m}} (t_{1},s) -\frac{\partial^{m}\,k}{\partial \,t^{m}}
(t_{2},s)\right\vert \,\varphi _{r}(s)\,\dif s <\varepsilon \]
when $|t_1-t_2|<\delta'$.

Therefore it is clear that, for $|t_1-t_2|<\min\{\delta,\delta'\}$, $t_1<t_2$, there exists a positive constant $\kappa_2$ such that
\[\left\vert ({\mathcal{T}}u)^{(m)}(t_{1})-({\mathcal{T}}u)^{(m)}(t_{2})\right\vert< \kappa_2\,\varepsilon\]
for all $u\in B$.

Analogously, when $|t_1-t_2|<\delta$, $t_1>t_2$,  there exists some some positive constant $\kappa_3$ such that
\[\left\vert ({\mathcal{T}}u)^{(m)}(t_{1})-({\mathcal{T}}u)^{(m)}(t_{2})\right\vert< \kappa_3 \,\varepsilon\] 
for all $u\in B$.



We have proved the pointwise equicontinuity on $I$. Moreover, since $I$ is compact, pointwise equicontinuity is equivalent to uniform equicontinuity.


This way, we conclude that ${\mathcal{T}}(B)$ is equicontinuous in $\mathcal{C}^{m}(I)$.

\vspace{0.2cm} As a consequence, by Ascoli-Arzel\`{a} Theorem, we can affirm that ${\mathcal{T}}(B)$ is relatively compact in $\mathcal{C}^{m}(I)$ and so ${\mathcal{T}}$ is a completely continuous operator.
\end{proof}

\section{Main results}

We introduce now the following notation 
\begin{equation*}
\Lambda ^{i}:=\int_{0}^{T}h_{i}(s)\,\dif s,\quad \Lambda
_{i}:=\int_{a_{i}}^{b_{i}}\xi _{i}\,\phi _{i}(s)\,\dif s
\end{equation*}%
and define 
\begin{equation*}
\bar{\Lambda}:=(m+1)\,\max \{\Lambda ^{i}:\ i\in J\}\quad \text{and}\quad 
\underline{{\Lambda }}{:}=\max \{\xi _{i}\,\Lambda _{i}:\ i\in J_1\}.
\end{equation*}

On the other hand, we denote 
\begin{equation*}
f_{0}:=\liminf\limits_{|x_{0}|,\dots ,|x_{m}|\rightarrow 0}\,\min_{t\in I}%
\frac{f(t,x_{0},\dots ,x_{m})}{|x_{0}|+\cdots +|x_{m}|}
\end{equation*}%
and 
\begin{equation*}
f^{\infty }:=\limsup\limits_{|x_{0}|,\dots ,|x_{m}|\rightarrow \infty
}\,\max_{t\in I}\frac{f(t,x_{0},\dots ,x_{m})}{|x_{0}|+\cdots +|x_{m}|}.
\end{equation*}

We will give now our existence result.

\begin{theorem}
\label{T:exist1_orden_n} Assume that hypotheses $(H_{1})-(H_{7})$ hold. If $%
\bar{\Lambda}\,f^{\infty }<\underline{{\Lambda }}\,f_{0}$, then for all 
\begin{equation*}
\lambda \in \left( \frac{1}{\underline{{\Lambda }}\,f_{0}},\frac{1}{\bar{%
\Lambda}\,f^{\infty }}\right)
\end{equation*}%
operator ${\mathcal{T}}$ has a fixed point in the cone $K$, which is not a
trivial solution.
\end{theorem}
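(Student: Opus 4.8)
The plan is to apply the classical fixed point index theory on cones (the expansion/compression type arguments associated with Krasnoselskii and Guo), using the completely continuous operator $\mathcal{T}\colon K\to K$ established in Lemma~\ref{L:complet_cont}. First I would translate the asymptotic hypotheses into pointwise inequalities: since $\bar{\Lambda}\,f^{\infty}<1/\lambda<\underline{\Lambda}\,f_{0}$, there exist $\varepsilon>0$ and constants $\bar c:=f^{\infty}+\varepsilon$, $\underline c:=f_{0}-\varepsilon$ such that $\lambda\,\bar{\Lambda}\,\bar c<1<\lambda\,\underline{\Lambda}\,\underline c$. By definition of $f_{0}$ as a $\liminf$, there is $\rho>0$ with $f(t,x_{0},\dots,x_{m})\ge \underline c\,(|x_{0}|+\cdots+|x_{m}|)$ for all $t\in I$ whenever $|x_{0}|,\dots,|x_{m}|\le\rho$; similarly, by definition of $f^{\infty}$ as a $\limsup$, there is $R_{0}$ with $f(t,x_{0},\dots,x_{m})\le \bar c\,(|x_{0}|+\cdots+|x_{m}|)$ for all $t\in I$ whenever $\max\{|x_{0}|,\dots,|x_{m}|\}\ge R_{0}$; using $(H_{6})$ one upgrades the latter to $f(t,x_{0},\dots,x_{m})\le \bar c\,(|x_{0}|+\cdots+|x_{m}|)+C$ for some constant $C$ and all arguments, so that for $\|u\|$ large the linear term dominates.

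Next I would set up the two index computations on balls $K_{\rho}:=\{u\in K:\|u\|<\rho\}$ (small radius) and $K_{R}:=\{u\in K:\|u\|<R\}$ (large radius, $R$ to be chosen). On $\partial K_{\rho}$: for $u$ with $\|u\|=\rho$ there is an index $i_{0}\in J_{1}$ (or rather, using $(H_{5})$, an index where the cone's norm-control is available on all of $I$) where $\|u^{(i_{0})}\|_{\infty}=\|u\|=\rho$; exploiting the cone inequality $\min_{[a_{i_{0}},b_{i_{0}}]}u^{(i_{0})}\ge\xi_{i_{0}}\|u^{(i_{0})}\|$ together with the lower bound $\partial^{i_{0}}k/\partial t^{i_{0}}\ge\xi_{i_{0}}\phi_{i_{0}}$ from $(H_{4})$, one estimates, for $t\in[a_{i_{0}},b_{i_{0}}]$,
\begin{equation*}
(\mathcal{T}u)^{(i_{0})}(t)\ge \lambda\,\xi_{i_{0}}\int_{a_{i_{0}}}^{b_{i_{0}}}\phi_{i_{0}}(s)\,f(s,u(s),\dots,u^{(m)}(s))\,\dif s\ge \lambda\,\underline c\,\xi_{i_{0}}\int_{a_{i_{0}}}^{b_{i_{0}}}\phi_{i_{0}}(s)\,\bigl(|u(s)|+\cdots+|u^{(m)}(s)|\bigr)\,\dif s,
\end{equation*}
and, keeping only the $i_{0}$-th term and bounding $|u^{(i_{0})}(s)|$ below by $\xi_{i_{0}}\|u\|$ on $[a_{i_{0}},b_{i_{0}}]$, one gets $\|\mathcal{T}u\|\ge\lambda\,\underline c\,\underline{\Lambda}\,\|u\|>\|u\|$; the standard consequence is $i_{K}(\mathcal{T},K_{\rho})=0$ (no fixed point on the boundary, and the operator pushes outward). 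On $\partial K_{R}$ for $R$ large: using $(H_{3})$, $(H_{7})$ and the near-linear upper bound on $f$,
\begin{equation*}
\|(\mathcal{T}u)^{(i)}\|_{\infty}\le \lambda\int_{0}^{T}h_{i}(s)\Bigl(\bar c\,(m+1)\|u\|+C\Bigr)\dif s=\lambda\,\Lambda^{i}\bigl(\bar c\,(m+1)\|u\|+C\bigr),
\end{equation*}
so $\|\mathcal{T}u\|\le\lambda\,\bar{\Lambda}\,\bar c\,\|u\|+\lambda\,\bar{\Lambda}\,C/(m+1)$; since $\lambda\,\bar{\Lambda}\,\bar c<1$, for $R$ sufficiently large this gives $\|\mathcal{T}u\|<\|u\|$ on $\partial K_{R}$, whence $i_{K}(\mathcal{T},K_{R})=1$.

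Finally, by the additivity/excision property of the fixed point index, $i_{K}(\mathcal{T},K_{R}\setminus\overline{K_{\rho}})=1-0=1\ne0$, so $\mathcal{T}$ has a fixed point $u^{*}$ with $\rho\le\|u^{*}\|\le R$; in particular $u^{*}\ne0$, which is the asserted nontrivial solution. The main obstacle I anticipate is the bookkeeping in the lower estimate on $\partial K_{\rho}$: one must make sure the index $i_{0}$ chosen is genuinely controlled by $\|u\|$ — this is exactly where $(H_{5})$ enters, guaranteeing that for some $i_{0}$ either $[c_{i_{0}},d_{i_{0}}]=I$ (so $\|u^{(i_{0})}\|_{[c_{i_{0}},d_{i_{0}}]}=\|u^{(i_{0})}\|_{\infty}$) or the nonnegativity interval is all of $I$, and that $\{0,\dots,i_{0}\}\subset J_{0}$ so the lower derivatives are sign-controlled. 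A secondary technical point is verifying that the ``$+C$'' perturbation of the linear bound for $f$ is legitimate and harmless; this follows from $(H_{6})$ applied on the compact range $\max|x_{k}|\le R_{0}$, giving an $L^{1}$ bound $\varphi_{R_{0}}$, and from $(H_{7})$ ensuring $h_{i}\varphi_{R_{0}}\in L^{1}(I)$.
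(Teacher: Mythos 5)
There are two genuine gaps, and they occur precisely at the two points where the paper's argument is forced to abandon the textbook ``two concentric balls'' scheme you propose.

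First, the index-$0$ computation on $\partial K_{\rho}$. You assert that for $u$ with $\Vert u\Vert=\rho$ there is an index $i_{0}\in J_{1}$ (or in $J_{2}$) at which $\Vert u^{(i_{0})}\Vert_{\infty}=\Vert u\Vert$, and that $(H_{5})$ supplies this. It does not: $\Vert u\Vert=\max_{i\in J}\Vert u^{(i)}\Vert_{\infty}$ may be attained only at indices outside $J_{1}$ (in both of the paper's examples the highest derivatives are excluded from $J_{1}$), and $(H_{5})$ merely locates an $i_{0}$ with $[c_{i_{0}},d_{i_{0}}]=I$ or $[m_{i_{0}},n_{i_{0}}]=I$; it says nothing about where the norm is attained and does not even force that $i_{0}$ into $J_{1}$. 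Since the cone does not control higher derivatives by lower ones, $\max_{j\in J_{1}}\Vert u^{(j)}\Vert_{\infty}$ can be arbitrarily small compared with $\Vert u\Vert$, and then your chain of estimates gives essentially nothing; the inequality $\Vert\mathcal{T}u\Vert\ge\lambda\,\underline{c}\,\underline{\Lambda}\,\Vert u\Vert$ does not follow. The paper sidesteps this by never comparing with $\Vert u\Vert$: it proves $(\mathcal{T}u)^{(j)}(t)\ge\lambda\,(f_{0}-\varepsilon)\,\xi_{j}\,\Lambda_{j}\,u^{(j)}(t)$ on $[a_{j},b_{j}]$ and invokes the criterion that $\mathcal{T}u\not\preceq u$ on the boundary forces $i_{K}=0$ (Guo--Lakshmikantham), so only $u^{(j)}$ itself, not the full norm, enters the lower bound.

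Second, and more seriously, the index-$1$ computation on a large ball $\partial K_{R}$. The quantity $f^{\infty}$ is a $\limsup$ as \emph{all} of $|x_{0}|,\dots,|x_{m}|$ tend to infinity, so it only yields $f\le\bar{c}\,(|x_{0}|+\cdots+|x_{m}|)$ when $\min_{i}|x_{i}|\ge R_{0}$ --- this is exactly how the paper uses it (``when $\min\{|u^{(i)}(t)|\}\ge\tilde{C}$''). The complementary region $\{\min_{i}|x_{i}|<R_{0}\}$ is unbounded in the remaining coordinates, while $(H_{6})$ bounds $f$ only on compact boxes $(-r,r)^{m+1}$; hence the global affine bound $f\le\bar{c}\sum|x_{i}|+C$ cannot be extracted, and on $\partial K_{R}$ the integrand is evaluated at points where some $u^{(i)}(s)$ may be small, where you have no control. (Under your alternative reading ``$\max_{i}|x_{i}|\ge R_{0}$'' the affine bound would hold, but you would then be proving a weaker theorem: in the paper's first example $f^{\infty}=0$ only under the ``all variables large'' reading.) This is precisely why the paper replaces the large ball by the unbounded set $\Omega_{C}=\bigcup_{i}\{u\in K:\ \min_{t\in I}|u^{(i)}(t)|<C\}$, on whose boundary every derivative satisfies $|u^{(i)}(t)|\ge C$ pointwise so that the asymptotic bound applies directly, at the price of having to argue that the fixed point index is well defined on an unbounded open set. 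Your scheme needs either this device or an additional hypothesis on $f$.
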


\begin{proof}
Let $\lambda \in \left( \frac{1}{\underline{{\Lambda }}\,f_{0}},\frac{1}{%
\bar{\Lambda}\,f^{\infty }}\right) $ and choose $\varepsilon \in \left(
0,f_{0}\right) $ such that 
\begin{equation*}
\frac{1}{\underline{{\Lambda }}\,(f_{0}-\varepsilon )}\leq \lambda \leq 
\frac{1}{\bar{\Lambda}\,(f^{\infty }+\varepsilon )}.
\end{equation*}%
Taking into account the definition of $f_{0}$, we know that there exists $%
\delta _{1}>0$ such that when $\Vert u\Vert \leq \delta _{1}$, 
\begin{equation*}
f(t,u(t),\dots ,u^{(m)}(t))>\,(f_{0}-\varepsilon )\,\left( |u(t)|+\cdots
+|u^{(m)}(t)|\right) ,\quad \forall \,t\in I.
\end{equation*}%
Let 
\begin{equation*}
\Omega _{\delta _{1}}=\{u\in K;\,\Vert u\Vert <\delta _{1}\}
\end{equation*}%
and choose $u\in \partial \,\Omega _{\delta _{1}}$. We will prove that $\mathcal{T}u\not\preceq u$. We have that for $t\in \lbrack a_{i},b_{i}]$ and $j\in J_{1}$,
\begin{equation*}
\begin{split}
(\mathcal{T}u)^{(j)}(t)=& \,\lambda \int_{0}^{T}\frac{\partial ^{j}k}{%
\partial \,t^{j}}(t,s)\,f(s,u(s),\dots ,u^{(m)}(s))\,\dif s \geq 
\lambda
\int_{a_{j}}^{b_{j}}\frac{\partial ^{j}k}{\partial \,t^{j}}%
(t,s)\,f(s,u(s),\dots ,u^{(m)}(s))\,\dif s \\
\geq & \,\lambda \int_{a_{j}}^{b_{j}}\xi _{j}\,\phi _{j}(s)\,f(s,u(s),\dots
,u^{(m)}(s))\,\dif s \\
>& \,\lambda \int_{a_{j}}^{b_{j}}\xi _{j}\,\phi _{j}(s)\,(f_{0}-\varepsilon
)\,\left( |u(s)|+\cdots +|u^{(m)}(s)|\right) \,\dif s \\
\geq & \,\lambda \,(f_{0}-\varepsilon )\,\xi _{j}\,\Vert u^{(j)}\Vert
_{\lbrack a_{j},b_{j}]}\,\int_{a_{j}}^{b_{j}}\xi _{j}\,\phi _{j}(s)\,\dif s \\
=& \,\lambda \,(f_{0}-\varepsilon )\,\xi _{j}\,\Vert u^{(j)}\Vert _{\lbrack
a_{j},b_{j}]}\,\Lambda _{j}\geq \,\lambda \,(f_{0}-\varepsilon )\,\Lambda
_{j}\,\xi _{j}\,u^{(j)}(t).
\end{split}%
\end{equation*}%
As a consequence we have that for some $j\in J_{1}$, $(\mathcal{T}%
u)^{(j)}(t)>u^{(j)}(t)$ for all $t\in \lbrack a_{j},b_{j}]$ and so it is
proved that $\mathcal{T}u\not\preceq u$. We deduce (see \cite[Theorem 2.3.3]%
{GuoLaks}) that 
\begin{equation*}
i_{K}(\mathcal{T},\,\Omega _{\delta_{1}})=0.
\end{equation*}%
On the other hand, due to the definition of $f^{\infty }$, we know that
there exists $\tilde{C}>0$ such that when $\min \left\{
|u^{(i)}(t)|:i\in J\right\} \geq \tilde{C}$, 
\begin{equation*}
f(t,u(t),\dots ,u^{(m)}(t))\leq (f^{\infty }+\varepsilon )\,\left(
|u(t)|+\cdots +|u^{(m)}(t)|\right) \leq (m+1)\,(f^{\infty }+\varepsilon
)\,\Vert u\Vert ,\quad \forall \,t\in I.
\end{equation*}%
Let $C>\{\delta _{1},\,\tilde{C}\}$ and define 
\begin{equation*}
\Omega _{C}=\bigcup\limits_{i=0}^{m}\left\{ u\in K:\,\min_{t\in
I}|u^{(i)}(t)|<C\right\} .
\end{equation*}%
We note that $\Omega _{C}$ is an unbounded subset of the cone $K$.
Because of this, the fixed point index of operator $\mathcal{T}$ with
respect to $\Omega _{C}$, $i_{K}(\mathcal{T},\,\Omega _{C})$, is only defined in the case that the set of fixed points of
operator $\mathcal{T}$ in $\Omega _{C}$, that is, $(I-\mathcal{T}%
)^{-1}(\{0\})\cap \Omega _{C}$, is compact (see \cite{GranasDug}
for the details). We will see that $i_{K}(\mathcal{T},\,\Omega _{C})$ can be defined in this case.

First of all, since $(I-\mathcal{T})$ is a continuous operator, it is
obvious that $(I-\mathcal{T})^{-1}(\{0\})\cap \Omega _{C}$ is
closed.

Moreover, we can assume that $(I-\mathcal{T})^{-1}(\{0\})\cap \Omega
_{C}$ is bounded. Indeed, on the contrary, we would have infinite
fixed points of operator $\mathcal{T}$ on $\Omega _{C}$ and it
would be immediately deduced that $\mathcal{T}$ has an infinite number of
fixed points in the cone $K$. Therefore we may assume that there exists a
constant $M>0$ such that $\Vert u\Vert <M$ for all $u\in \,(I-\mathcal{T}%
)^{-1}(\{0\})\cap \Omega _{C}$.

Finally, it is left to see that $(I-\mathcal{T})^{-1}(\{0\})\cap \Omega
_{C}$ is equicontinuous. This property follows from the fact that $%
(I-\mathcal{T})^{-1}(\{0\})\cap \Omega _{C}$ is bounded. The proof
is totally analogous to Step 3 in the proof of Lemma \ref{L:complet_cont}.

Now, we will calculate $i_{K}(\mathcal{T},\,\Omega _{C})$. In
particular, we will prove that $\Vert \mathcal{T}u\Vert \leq \Vert u\Vert $
for all $u\in \partial \,\Omega _{C}$. Let $u\in \partial \,\Omega
_{C}$, that is, $u\in K$ such that 
\begin{equation*}
\min \left\{ \min_{t\in I}|u^{(i)}(t)|:\ i\in J\right\} =C.
\end{equation*}%
Then, for $i\in J$,
\begin{equation*}
\begin{split}
|(\mathcal{T}u)^{(i)}(t)|\leq & \,\lambda \int_{0}^{T}\left\vert \frac{%
\partial ^{i}k}{\partial \,t^{i}}(t,s)\right\vert \,f(s,u(s),\dots
,u^{(m)}(s))\,\dif s  \leq \lambda \int_{0}^{T}h_{i}(s)\,f(s,u(s),\dots ,u^{(m)}(s))\,\dif s \\[2pt]
\leq & \,(m+1)\,\lambda \int_{0}^{T}h_{i}(s)\,(f^{\infty }+\varepsilon
)\,\Vert u\Vert \,\dif s
= (m+1)\,\lambda \,(f^{\infty }+\varepsilon )\,\Vert u\Vert \,\Lambda
^{i}\\
\leq & \,\lambda \,(f^{\infty }+\varepsilon )\,\Vert u\Vert \,\bar{\Lambda}%
\leq \Vert u\Vert .
\end{split}%
\end{equation*}%
We deduce that 
\begin{equation*}
\Vert \mathcal{T}u\Vert \leq \Vert u\Vert
\end{equation*}%
and as a consequence (\cite[Corollary 7.4]{GranasDug}) we have that 
\begin{equation*}
i_{K}(\mathcal{T},\,\Omega _{C})=1.
\end{equation*}%
Therefore, we conclude that $\mathcal{T}$ has a fixed point in $\bar{\Omega}%
_{C}\setminus \Omega _{\delta _{1}}$.
\end{proof}

Consequently, we obtain the following

\begin{corollary}
\label{cor_exist1} Assume that hypotheses $(H_{1})-(H_{7})$ hold. Then,

\begin{itemize}
\item[(i)] If $f_0=\infty$ and $f^{\infty}=0$, then for all $%
\lambda\in(0,\infty)$, ${\mathcal{T}}$ has a fixed point in the cone.

\item[(ii)] If $f_0=\infty$ and $0<f^{\infty}<\infty$, then for all $%
\lambda\in\left(0,\frac{1}{\bar{\Lambda}\,f^{\infty}}\right)$, ${\mathcal{T}}
$ has a fixed point in the cone.

\item[(iii)] If $0<f_{0}<\infty $ and $f^{\infty }=0$, then for all $\lambda
\in \left( \frac{1}{\underline{{\Lambda }}\,f_{0}},\infty \right) $, ${%
\mathcal{T}}$ has a fixed point in the cone.
\end{itemize}
\end{corollary}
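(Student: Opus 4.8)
The plan is to obtain each of the three items as a limiting case of Theorem~\ref{T:exist1_orden_n}, read under the conventions $1/0=+\infty$ and $1/\infty=0$. First I would record that $(H_{3})$ and $(H_{4})$ force $0<\underline{\Lambda}<\infty$ and $0<\bar{\Lambda}<\infty$: each $h_{i}\in L^{1}(I)$, so every $\Lambda^{i}$, and hence $\bar{\Lambda}$, is finite, while for $i\in J_{1}$ the last condition in $(H_{4})$ gives $\Lambda_{i}=\int_{a_{i}}^{b_{i}}\xi_{i}\,\phi_{i}(s)\,\dif s>0$, so $\underline{\Lambda}>0$. Granting this, in case~(i) one has $\bar{\Lambda}\,f^{\infty}=0<\underline{\Lambda}\,f_{0}$ and $\bigl(\tfrac{1}{\underline{\Lambda}\,f_{0}},\tfrac{1}{\bar{\Lambda}\,f^{\infty}}\bigr)=(0,\infty)$; in case~(ii), $\bar{\Lambda}\,f^{\infty}<\infty=\underline{\Lambda}\,f_{0}$ and the interval is $\bigl(0,\tfrac{1}{\bar{\Lambda}\,f^{\infty}}\bigr)$; in case~(iii), $\bar{\Lambda}\,f^{\infty}=0<\underline{\Lambda}\,f_{0}$ and the interval is $\bigl(\tfrac{1}{\underline{\Lambda}\,f_{0}},\infty\bigr)$. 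Thus in every case the asserted $\lambda$-range coincides with the one produced by Theorem~\ref{T:exist1_orden_n}, and it remains only to check that its proof still runs when $f_{0}=\infty$ or $f^{\infty}=0$.

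Case~(iii) needs no new idea. With $f^{\infty}=0$ and $f_{0}\in(0,\infty)$, for a fixed $\lambda>\tfrac{1}{\underline{\Lambda}\,f_{0}}$ one can still pick $\varepsilon\in(0,f_{0})$ small enough that $\tfrac{1}{\underline{\Lambda}\,(f_{0}-\varepsilon)}\le\lambda\le\tfrac{1}{\bar{\Lambda}\,\varepsilon}$, the right-hand expression replacing $\tfrac{1}{\bar{\Lambda}\,(f^{\infty}+\varepsilon)}$ since $f^{\infty}+\varepsilon=\varepsilon$; the two inequalities are compatible precisely because $\lambda>\tfrac{1}{\underline{\Lambda}\,f_{0}}$. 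The index computations $i_{K}(\mathcal{T},\Omega_{\delta_{1}})=0$ and $i_{K}(\mathcal{T},\Omega_{C})=1$ then proceed word for word, and the resulting fixed point lies in $\bar{\Omega}_{C}\setminus\Omega_{\delta_{1}}$, hence is nontrivial.

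For cases~(i) and~(ii) the only adjustment is in the step yielding $i_{K}(\mathcal{T},\Omega_{\delta_{1}})=0$, the one that invokes $f_{0}$. Fix $\lambda$ in the stated range. Since $f_{0}=\infty$, for the finite constant $L:=\tfrac{1}{\underline{\Lambda}\,\lambda}+1$ there is $\delta_{1}>0$ such that $f(t,u(t),\dots,u^{(m)}(t))>L\,(|u(t)|+\cdots+|u^{(m)}(t)|)$ for all $t\in I$ whenever $\|u\|\le\delta_{1}$; running the argument of Theorem~\ref{T:exist1_orden_n} with $L$ in place of $f_{0}-\varepsilon$, and using $\lambda L\,\underline{\Lambda}=1+\lambda\,\underline{\Lambda}>1$ where that proof uses $\lambda(f_{0}-\varepsilon)\,\underline{\Lambda}\ge1$, yields $(\mathcal{T}u)^{(j)}(t)>u^{(j)}(t)$ for all $t\in[a_{j},b_{j}]$ for some $j\in J_{1}$, hence $i_{K}(\mathcal{T},\Omega_{\delta_{1}})=0$. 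The computation $i_{K}(\mathcal{T},\Omega_{C})=1$ then follows exactly as in the theorem: in case~(ii), with $f^{\infty}\in(0,\infty)$, choose $\varepsilon>0$ with $f^{\infty}+\varepsilon<\tfrac{1}{\bar{\Lambda}\,\lambda}$, which is possible because $\lambda<\tfrac{1}{\bar{\Lambda}\,f^{\infty}}$; in case~(i), with $f^{\infty}=0$, take any $\eta\in\bigl(0,\tfrac{1}{\bar{\Lambda}\,\lambda}\bigr)$ and use $\eta$ in place of $f^{\infty}+\varepsilon$. In both cases $\|\mathcal{T}u\|\le\|u\|$ holds for all $u\in\partial\Omega_{C}$ as before, so $i_{K}(\mathcal{T},\Omega_{C})=1$ and there is a nontrivial fixed point in $\bar{\Omega}_{C}\setminus\Omega_{\delta_{1}}$.

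The delicate point --- and the main obstacle, such as it is --- is the bookkeeping at the degenerate parameter values: one must make sure the surrogate constants $L$ and $\eta$ can always be taken strictly above $\tfrac{1}{\underline{\Lambda}\,\lambda}$, respectively strictly below $\tfrac{1}{\bar{\Lambda}\,\lambda}$, for every $\lambda$ in the stated intervals, which is exactly what the definitions of those intervals in (i)--(iii), together with $0<\underline{\Lambda},\bar{\Lambda}<\infty$, guarantee. No analytic input beyond Theorem~\ref{T:exist1_orden_n} is required.
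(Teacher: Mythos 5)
Your proposal is correct and follows essentially the same route as the paper, which presents the corollary as an immediate consequence of Theorem~\ref{T:exist1_orden_n} under the conventions $1/0=\infty$ and $1/\infty=0$. Your explicit handling of the degenerate cases (replacing $f_{0}-\varepsilon$ by a large constant $L$ when $f_{0}=\infty$ and $f^{\infty}+\varepsilon$ by a small $\eta$ when $f^{\infty}=0$) simply makes precise the bookkeeping the paper leaves implicit.
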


\section{Existence and multiplicity of solutions}

In this section we will use the fixed point index theory to study the
existence of multiple fixed points of operator ${\mathcal{T}}$. In \cite%
{CabInfTojo} the authors apply similar arguments to functional equations
that only depend on the values of $u$. First of all, we compile some
classical results regarding to fixed point index (see \cite{Amann,GuoLaks}
for the details).

\begin{lemma}
Let $D$ be an open bounded set with $D_K=D\cap K\neq \emptyset$ and $\bar{D}%
_K\neq K$. Assume that $F\colon \bar{D}_K\rightarrow K$ is a compact map
such that $x\neq F\,x$ for $x\in\partial D_K$. Then the fixed point index $%
i_K(F,D_K)$ has the following properties:

\begin{itemize}
\item[(1)] If there exists $e\in K\setminus\{0\}$ such that $x\neq
F\,x+\alpha\,e$ for all $x\in\partial D_K$ and all $\alpha>0$, then $%
i_K(F,D_K)=0$.

\item[(2)] If $\mu\,x\neq F\,x$ for all $x\in\partial D_K$ and for every $%
\mu\ge 1$, then $i_K(F,D_K)=1$.

\item[(3)] Let $D^1$ be open in $X$ with $\bar{D}^1\subset D_K$. If $%
i_K(F,D_K)=1$ and $i_K(F,D^1_K)=0$, then $F$ has a fixed point in $%
D_K\setminus \bar{D}^1_K$. The same result holds if $i_K(F,D_K)=0$ and $%
i_K(F,D^1_K)=1$.
\end{itemize}
\end{lemma}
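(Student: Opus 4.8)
The plan is to derive all three items as formal consequences of the standard axioms of the fixed point index of a compact map on a cone --- normalization, additivity (hence excision), homotopy invariance, and the solution property ($i_K(F,D_K)\neq0$ forces a fixed point of $F$ in $D_K$) --- in the spirit of \cite{Amann,GuoLaks}. In each case the work reduces to producing an admissible compact homotopy on $\partial D_K$ joining $F$ to a map whose index is already known.

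For (1), I would first use compactness of $F$ and boundedness of $\bar{D}_K$ to pick $\alpha_0>0$ with $\|Fx+\alpha_0 e\|>\|x\|$ for every $x\in\bar{D}_K$ (reverse triangle inequality, together with $\|e\|>0$ since $e\neq0$); then $F+\alpha_0 e$ has no fixed point in $\bar{D}_K$, so $i_K(F+\alpha_0 e,D_K)=0$. The linear homotopy $H(t,x)=Fx+t\alpha_0 e$, $t\in[0,1]$, takes values in $K$ because $K$ is a convex cone, is compact, and is fixed-point-free on $\partial D_K$: at $t=0$ by the standing hypothesis $x\neq Fx$, and for $t\in(0,1]$ by the assumption $x\neq Fx+\alpha e$ with $\alpha=t\alpha_0>0$. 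Homotopy invariance then gives $i_K(F,D_K)=i_K(F+\alpha_0 e,D_K)=0$.

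For (2), I would use the compact homotopy $H(t,x)=tFx$, $t\in[0,1]$, which sends $\bar{D}_K$ into $K$ since $K$ is a cone. A fixed point $x=tFx$ on $\partial D_K$ with $t\in(0,1]$ would give $Fx=\mu x$ with $\mu=1/t\geq1$, contradicting the hypothesis, while $t=0$ forces $x=0$, which is not a boundary point (here $0\in D_K$, which is also what makes the normalization $i_K(0,D_K)=1$ available). Hence $H$ is admissible and homotopy invariance yields $i_K(F,D_K)=i_K(0,D_K)=1$. For (3), note that $i_K(F,D^1_K)$ being defined already entails that $F$ has no fixed point on $\partial D^1_K$; combining this with $x\neq Fx$ on $\partial D_K$, additivity of the index gives $i_K(F,D_K\setminus\bar{D}^1_K)=i_K(F,D_K)-i_K(F,D^1_K)$, which equals $\pm1\neq0$ in both scenarios, so by the solution property $F$ has a fixed point in $D_K\setminus\bar{D}^1_K$.

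These arguments are entirely formal, so I do not anticipate a real obstacle; the only things to check carefully are admissibility of the two homotopies on $\partial D_K$ (done above), that each auxiliary map genuinely sends $\bar{D}_K$ into $K$, and the tacit requirement in (3) that $F$ be fixed-point-free on $\partial D^1_K$ (implicit in the hypothesis that $i_K(F,D^1_K)$ is defined). Alternatively, one may simply invoke \cite{Amann,GuoLaks}, where these properties are recorded.
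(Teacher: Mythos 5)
The paper does not prove this lemma at all: it is presented as a compilation of classical properties of the fixed point index, with the reader referred to \cite{Amann,GuoLaks} for details. Your derivation of the three items from the index axioms (homotopy invariance, additivity/excision, the solution property, normalization) is the standard argument found in those references, and it is correct: the homotopy $H(t,x)=Fx+t\alpha_0 e$ with $\alpha_0$ large is admissible and lands in $K$ because a cone is closed under addition and nonnegative scaling, the endpoint map is fixed-point-free so has index $0$; the homotopy $tFx$ handles (2); and additivity handles (3). The one point deserving emphasis is the remark you make only parenthetically in (2): the statement as transcribed assumes only $D_K\neq\emptyset$, but item (2) is actually false without $0\in D_K$ (take $K=[0,\infty)\subset\mathbb{R}$, $D=(1,2)$ and $F\equiv 0$: the hypothesis $\mu x\neq Fx$ on $\partial D_K$ holds for all $\mu\geq 1$, yet $F$ has no fixed point in $\bar{D}_K$, so the index is $0$). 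The standard formulations of this lemma include $0\in D_K$ explicitly, and the sets $K_\rho$ and $V_\rho$ to which the paper applies it do contain $0$, so nothing downstream is affected; but your proof should state this as a hypothesis rather than as an aside.
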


We will define the following sets: 
\begin{equation*}
K_{\rho}=\{u\in K; \, \|u\|<\rho\},
\end{equation*}
\begin{equation*}
V_{\rho}=\left\{u\in K: \, \min_{t\in\left[a_i,b_i\right]} u^{(i)}(t)<\rho,
\, i\in J_2, \ \ \|u^{(i)}\|_\infty<\rho, \, i\in J\setminus J_2\right\},
\end{equation*}
where $J=\{0,\dots,m\}$ and 
\begin{equation*}
J_2=\left\{i\in J: \ [c_i,d_i]\equiv I \right\}.
\end{equation*}

To ensure that the sets $K_\rho$ and $V_\rho$ are not the same, we need to
change condition $(H_5)$ into

\begin{itemize}
\item[$(\widetilde H_5)$] There exists $i_0\in\{0,\dots,m\}$ such that $%
[c_{i_0},d_{i_0}]\equiv I$ and, moreover, $\{0,1,\dots,i_0\}\subset J_0$.
\end{itemize}

In this situation, it is clear that $J_2\neq \emptyset$ and therefore 
\begin{equation*}
K_{\rho}\subsetneq V_{\rho}\subsetneq K_{\frac{\rho}{c}}
\end{equation*}
where 
\begin{equation}  \label{e-c-def}
c=\min\{\xi_i: \ i\in J_2\}.
\end{equation}

Now we will give sufficient conditions under which the index of the previous
sets is either 1 or 0.

\begin{lemma}
\label{L:index1_fp} Let 
\begin{equation*}
\frac{1}{N}=\max\left\{\sup_{t\in I} \int_{0}^{T}\left|\frac{\partial^i k}{%
\partial\,t^i}(t,s)\right|\,\dif s: \ i\in J \right\}
\end{equation*}
and 
\begin{equation*}
f^{\rho}=\sup\left\{\frac{f(t,x_0,\dots,x_m)}{\rho};\ t\in I, \ x_i\in[%
-\rho,\rho], \ i\in J \right\}.
\end{equation*}
If there exists $\rho>0$ such that 
\begin{equation}
\lambda\,\frac{f^{\rho}}{N}<1,  \tag{$I^1_{\rho}$}
\end{equation}
then $i_K({\mathcal{T}},K_{\rho})=1$.
\end{lemma}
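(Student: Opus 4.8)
The plan is to apply part (2) of the fixed point index lemma stated above to the set $K_\rho$. First I would check the structural hypotheses: $K_\rho=\{u\in K:\|u\|<\rho\}$ is relatively open, bounded and nonempty, $\overline{K_\rho}\neq K$ since a nontrivial cone is unbounded, and $\mathcal{T}\colon\overline{K_\rho}\to K$ is compact by Lemma \ref{L:complet_cont}. It then remains to verify the hypothesis of part (2), namely that $\mu\,u\neq\mathcal{T}u$ for every $u$ in the relative boundary $\partial K_\rho=\{u\in K:\|u\|=\rho\}$ and every $\mu\geq 1$; note that the case $\mu=1$ of this statement also guarantees that the index is well defined.

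Second, I would argue by contradiction. Suppose $\mu\,u=\mathcal{T}u$ for some $u\in\partial K_\rho$ and some $\mu\geq 1$. Since $\|u\|=\rho$ we have $|u^{(i)}(t)|\leq\rho$ for all $t\in I$ and all $i\in J$, so by the definition of $f^{\rho}$,
\[
f\bigl(s,u(s),u'(s),\dots,u^{(m)}(s)\bigr)\leq\rho\,f^{\rho}\qquad\text{for a.e. }s\in I.
\]
Using the representation $(\mathcal{T}u)^{(i)}(t)=\lambda\int_{0}^{T}\frac{\partial^{i}k}{\partial t^{i}}(t,s)\,f(s,u(s),\dots,u^{(m)}(s))\,\dif s$ (valid as in the proof of Lemma \ref{L:complet_cont}), for each $i\in J$ and $t\in I$ one obtains
\[
\mu\,|u^{(i)}(t)|=\bigl|(\mathcal{T}u)^{(i)}(t)\bigr|\leq\lambda\int_{0}^{T}\left|\frac{\partial^{i}k}{\partial t^{i}}(t,s)\right|f(s,u(s),\dots,u^{(m)}(s))\,\dif s\leq\lambda\,\rho\,f^{\rho}\int_{0}^{T}\left|\frac{\partial^{i}k}{\partial t^{i}}(t,s)\right|\dif s\leq\frac{\lambda\,\rho\,f^{\rho}}{N}.
\]
Taking the supremum over $t\in I$ and then the maximum over $i\in J$ gives $\mu\,\|u\|\leq\lambda\,\rho\,f^{\rho}/N$. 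Since condition $(I^{1}_{\rho})$ says $\lambda\,f^{\rho}/N<1$ and $\|u\|=\rho>0$, this forces $\mu<1$, contradicting $\mu\geq 1$.

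Therefore no such pair $(u,\mu)$ exists, the hypothesis of part (2) of the index lemma is satisfied, and we conclude $i_{K}(\mathcal{T},K_{\rho})=1$. I do not anticipate any genuine difficulty here: the computation is a one-line kernel estimate against $f^{\rho}$ and $1/N$, and the only points meriting care are the bookkeeping that makes the index well defined (compactness of $\mathcal{T}$ via Lemma \ref{L:complet_cont} together with the $\mu=1$ instance of the estimate) and the correct reading of $\partial K_{\rho}$ as the boundary relative to the cone $K$.
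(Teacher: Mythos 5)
Your proof is correct and follows essentially the same route as the paper: a contradiction argument assuming $\mu\,u=\mathcal{T}u$ on $\partial K_\rho$, bounding $f$ by $\rho\,f^{\rho}$ and the kernel integral by $1/N$, and invoking part (2) of the index lemma. The only difference is that you spell out the structural hypotheses (openness, boundedness, compactness of $\mathcal{T}$) that the paper leaves implicit, which is harmless.
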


\begin{proof}
We will prove that ${\mathcal{T}} u\neq \mu\,u$ for all $u\in\partial
K_{\rho}$ and for every $\mu\ge 1$.

Suppose, on the contrary, that there exist some $u\in\partial K_{\rho}$ and $%
\mu\ge 1$ such that 
\begin{equation*}
\mu\,u^{(i)}(t)=\lambda \int_{0}^{T}\frac{\partial^i k}{\partial\,t^i}%
(t,s)\,f(s,u(s),\dots,u^{(m)}(s))\,\dif s.
\end{equation*}
Taking the supremum for $t\in I$, we obtain that 
\begin{equation*}
\begin{split}
\mu\,\|u^{(i)}\|_{\infty} &\le \lambda\,\sup_{t\in I}\int_{0}^{T} \left|%
\frac{\partial^i k}{\partial\,t^i}(t,s)\right|
\,f(s,u(s),\dots,u^{(m)}(s))\,\dif s \le \lambda\,\rho \, f^{\rho} \, \sup_{t\in
I} \int_{0}^{T}\left|\frac{\partial^i k}{\partial\,t^i}(t,s)\right|\,\dif s \\%
[0.1cm]
&\le \lambda\,\rho \,\frac{f^{\rho}}{N}<\rho.
\end{split}%
\end{equation*}

Consequently, we deduce that 
\begin{equation*}
\mu \,\rho =\mu \,\max \{\Vert u^{(i)}\Vert _{\infty }:\,i\in J\}<\rho ,
\end{equation*}%
which contradicts the assumption that $\mu \geq 1$. Therefore, $i_{K}({%
\mathcal{T}},K_{\rho })=1$.
\end{proof}

\begin{lemma}
\label{L:index0-fpi} For $i\in J_1$, let 
\begin{equation*}
\frac{1}{M_i}=\inf_{t\in\left[a_i,b_i\right]} \int_{a_i}^{b_i}\frac{%
\partial^i k}{\partial\,t^i}(t,s)\,\dif s,
\end{equation*}
and 
\begin{equation*}
f^i_{\rho}=\inf\left\{\frac{f(t,x_0,\dots,x_m)}{\rho}:\ t\in[a_i,b_i], \
x_j\in \left[0,\frac{\rho}{\xi_j}\right], \, j\in J_2, \ x_k\in\left[0,\rho%
\right],\, k\in J\setminus J_2 \right\}.
\end{equation*}
If there exists $\rho>0$ and $i_0\in J_1$ such that 
\begin{equation}
\lambda\,\frac{f^{i_0}_{\rho}}{M_{i_0}}>1,  \tag{$I^0_{\rho}$}
\end{equation}
then $i_K({\mathcal{T}},V_{\rho})=0$.
\end{lemma}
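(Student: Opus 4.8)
The plan is to invoke property (1) of the fixed point index recalled above: it suffices to produce some $e\in K\setminus\{0\}$ for which $u\neq \mathcal{T}u+\alpha\,e$ holds for every $u\in\partial V_{\rho}$ and every $\alpha>0$. (That $i_{K}(\mathcal{T},V_{\rho})$ is well defined follows from Lemma~\ref{L:complet_cont} and the boundedness of $V_{\rho}$, since $V_{\rho}\subsetneq K_{\rho/c}$, once $\mathcal{T}$ is known to be fixed-point free on $\partial V_{\rho}$ — which is the same estimate below taken with $\alpha=0$.) I would take any fixed $e\in K\setminus\{0\}$, for definiteness $e\equiv 1$; what matters is only that, since $i_{0}\in J_{1}$, the cone inequality applied with $j=i_{0}$ forces $\min_{[a_{i_{0}},b_{i_{0}}]}e^{(i_{0})}\geq\xi_{i_{0}}\,\|e^{(i_{0})}\|_{[c_{i_{0}},d_{i_{0}}]}\geq 0$, so that $e^{(i_{0})}\geq 0$ on $[a_{i_{0}},b_{i_{0}}]$.

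Suppose, for a contradiction, that $u=\mathcal{T}u+\alpha\,e$ with $u\in\partial V_{\rho}$ and $\alpha>0$. The first step is to pin down the arguments of $f$ on $[a_{i_{0}},b_{i_{0}}]$. Since $u\in\partial V_{\rho}\subseteq\overline{V_{\rho}}$ and the functionals defining $V_{\rho}$ are continuous, $\min_{[a_{j},b_{j}]}u^{(j)}\leq\rho$ for $j\in J_{2}$ and $\|u^{(j)}\|_{\infty}\leq\rho$ for $j\in J\setminus J_{2}$; combining the first bound with the cone inequality $\min_{[a_{j},b_{j}]}u^{(j)}\geq\xi_{j}\,\|u^{(j)}\|_{[c_{j},d_{j}]}=\xi_{j}\,\|u^{(j)}\|_{\infty}$ (valid for $j\in J_{2}\subseteq J_{1}$, where $[c_{j},d_{j}]\equiv I$) yields $\|u^{(j)}\|_{\infty}\leq\rho/\xi_{j}$. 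Using also that, by $u\in K$ together with $(H_{2})$, $(H_{4})$ and $(\widetilde{H}_{5})$, the relevant derivatives $u^{(j)}$ are nonnegative on $[a_{i_{0}},b_{i_{0}}]$, the point $(s,u(s),\dots,u^{(m)}(s))$ lies, for a.\,e.\ $s\in[a_{i_{0}},b_{i_{0}}]$, in the region over which $f^{i_{0}}_{\rho}$ is computed, whence $f(s,u(s),\dots,u^{(m)}(s))\geq \rho\,f^{i_{0}}_{\rho}$ there.

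Next I would differentiate $i_{0}$ times and argue exactly as in the proof of Theorem~\ref{T:exist1_orden_n}: for $t\in[a_{i_{0}},b_{i_{0}}]$ the integrand $\frac{\partial^{i_{0}}k}{\partial t^{i_{0}}}(t,s)\,f(s,u(s),\dots,u^{(m)}(s))$ is nonnegative for a.\,e.\ $s\in I$ by $(H_{4})$ and $f\geq 0$, so, since $e^{(i_{0})}\geq 0$ on $[a_{i_{0}},b_{i_{0}}]$,
\begin{equation*}
\begin{split}
u^{(i_{0})}(t)&=(\mathcal{T}u)^{(i_{0})}(t)+\alpha\,e^{(i_{0})}(t)\geq \lambda\int_{a_{i_{0}}}^{b_{i_{0}}}\frac{\partial^{i_{0}}k}{\partial t^{i_{0}}}(t,s)\,f\big(s,u(s),\dots,u^{(m)}(s)\big)\,\dif s\\
&\geq \lambda\,\rho\,f^{i_{0}}_{\rho}\int_{a_{i_{0}}}^{b_{i_{0}}}\frac{\partial^{i_{0}}k}{\partial t^{i_{0}}}(t,s)\,\dif s\geq \lambda\,\rho\,\frac{f^{i_{0}}_{\rho}}{M_{i_{0}}}>\rho,
\end{split}
\end{equation*}
the last inequality being exactly $(I^{0}_{\rho})$. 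Hence $\min_{t\in[a_{i_{0}},b_{i_{0}}]}u^{(i_{0})}(t)>\rho$, which contradicts $u\in\overline{V_{\rho}}$: directly when $i_{0}\in J_{2}$, and otherwise because it forces $\|u^{(i_{0})}\|_{\infty}>\rho$. This contradiction establishes $u\neq\mathcal{T}u+\alpha\,e$ on $\partial V_{\rho}$, and therefore $i_{K}(\mathcal{T},V_{\rho})=0$.

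The compactness and admissibility bookkeeping (that $V_{\rho}$ is open, bounded and nonempty in $K$, and that $\mathcal{T}$ restricted there is compact by Lemma~\ref{L:complet_cont}) is routine, as is the displayed estimate. The step I expect to require genuine care is the first one: verifying that along every $u\in\overline{V_{\rho}}$ the values $u^{(j)}(s)$, for $s\in[a_{i_{0}},b_{i_{0}}]$, really do fall into the boxes $[0,\rho/\xi_{j}]$ ($j\in J_{2}$) and $[0,\rho]$ ($j\in J\setminus J_{2}$) used in the definition of $f^{i_{0}}_{\rho}$ — in particular their nonnegativity there — which is precisely the place where the interlocking of the subintervals in $(H_{2})$, $(H_{4})$ and $(\widetilde{H}_{5})$ has to be exploited.
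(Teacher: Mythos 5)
Your argument is correct and follows essentially the same route as the paper's proof: take $e\equiv 1$, suppose $u=\mathcal{T}u+\alpha\,e$ for some $u\in\partial V_{\rho}$ and $\alpha>0$, and use $(H_{4})$ together with $(I^{0}_{\rho})$ to force $u^{(i_{0})}(t)>\rho$ on $[a_{i_{0}},b_{i_{0}}]$, contradicting $u\in\partial V_{\rho}$. You are in fact more explicit than the paper about the bookkeeping — in particular about why $(u(s),\dots,u^{(m)}(s))$ lands in the box over which $f^{i_{0}}_{\rho}$ is computed — a point the published proof leaves implicit.
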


\begin{proof}
We will prove that there exists $e\in K\setminus\{0\}$ such that $u\neq {%
\mathcal{T}} u+\alpha\,e$ for all $u\in\partial V_{\rho}$ and all $\alpha>0$.

Let us take $e(t)=1$ and suppose that there exists some $u\in\partial
V_{\rho}$ and $\alpha>0$ such that $u={\mathcal{T}} u+\alpha$. Then, for $%
t\in\left[a_{i_0},b_{i_0}\right]$, 
\begin{equation*}
\begin{split}
u^{({i_0})}(t)\ge & \,\lambda\int_{0}^{T}\frac{\partial^{i_0} k}{%
\partial\,t^{i_0}}(t,s)\,f(s,u(s),\dots,u^{(m)}(s))\,\dif s \ge
\lambda\int_{a_{i_0}}^{b_{i_0}}\frac{\partial^{i_0} k}{\partial\,t^{i_0}}%
(t,s)\,f(s,u(s),\dots,u^{(m)}(s))\,\dif s \\
\ge & \, \lambda\,\rho\,f^{i_0}_{\rho}\int_{a_{i_0}}^{b_{i_0}}\frac{%
\partial^{i_0} k}{\partial\,t^{i_0}}(t,s)\,\dif s>\rho.
\end{split}%
\end{equation*}

Consequently, $u^{({i_0})}(t)>\rho$ for $t\in\left[a_{i_0},b_{i_0}\right]$,
which is a contradiction. Thus, ${i_K({\mathcal{T}},V_\rho)=0}$.
\end{proof}

Combining the previous lemmas, it is possible to obtain some conditions
under which operator ${\mathcal{T}}$ has multiple fixed points.

\begin{theorem}
\label{T:exist2_orden_n} Assume that conditions $(H_1)-(H_4)$, $(\widetilde{H%
}_5)$ and $(H_6)-(H_7)$ hold and let $c$ be defined in \eqref{e-c-def}. The
integral equation \eqref{e-Int} has at least one non trivial solution in $K$
if one of the following conditions hold

\begin{itemize}
\item[(C1)] There exist $\rho_1,\,\rho_2 \in (0,\infty)$, $\frac{\rho_1}{c}%
<\rho_2$, such that $(I^0_{\rho_1})$ and $(I^1_{\rho_2})$ are verified.

\item[(C2)] There exist $\rho_1,\,\rho_2 \in (0,\infty)$, $\rho_1<\rho_2$,
such that $(I^1_{\rho_1})$ and $(I^0_{\rho_2})$ are verified.
\end{itemize}

The integral equation \eqref{e-Int} has at least two non trivial solutions
in $K$ if one of the following conditions hold

\begin{itemize}
\item[(C3)] There exist $\rho_1,\,\rho_2,\,\rho_3 \in (0,\infty)$, $\frac{%
\rho_1}{c}<\rho_2<\rho_3$, such that $(I^0_{\rho_1})$, $(I^1_{\rho_2})$ and $%
(I^0_{\rho_3})$ are verified.

\item[(C4)] There exist $\rho_1,\,\rho_2,\,\rho_3 \in (0,\infty)$, with $%
\rho_1<\rho_2$ and $\frac{\rho_2}{c}<\rho_3$, such that $(I^1_{\rho_1})$, $%
(I^0_{\rho_2})$ and $(I^1_{\rho_3})$ are verified.
\end{itemize}

The integral equation \eqref{e-Int} has at least three non trivial solutions
in $K$ if one of the following conditions hold

\begin{itemize}
\item[(C5)] There exist $\rho_1,\,\rho_2,\,\rho_3,\,\rho_4 \in (0,\infty)$,
with $\frac{\rho_1}{c}<\rho_2<\rho_3$ and $\frac{\rho_3}{c}<\rho_4$, such
that $(I^0_{\rho_1})$, $(I^1_{\rho_2})$, $(I^0_{\rho_3})$ and $%
(I^1_{\rho_4}) $ are verified.

\item[(C6)] There exist $\rho_1,\,\rho_2,\,\rho_3,\,\rho_4 \in (0,\infty)$,
with $\rho_1<\rho_2$ and $\frac{\rho_2}{c}<\rho_3<\rho_4$, such that $%
(I^1_{\rho_1})$, $(I^0_{\rho_2})$, $(I^1_{\rho_3})$ and $(I^0_{\rho_4})$ are
verified.
\end{itemize}
\end{theorem}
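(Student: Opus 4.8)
The plan is to combine Lemmas \ref{L:complet_cont}, \ref{L:index1_fp} and \ref{L:index0-fpi} with part (3) of the fixed point index lemma, treating the six cases essentially in parallel. By Lemma \ref{L:complet_cont}, $\mathcal{T}\colon K\to K$ is completely continuous under $(H_1)$--$(H_7)$, so the fixed point index of $\mathcal{T}$ over the relevant relatively open bounded subsets of $K$ is well defined, provided $\mathcal{T}$ has no fixed point on the boundary; if it does, we are already done since that boundary fixed point is nontrivial (the condition $(I^1_\rho)$ or $(I^0_\rho)$ at the corresponding level forces $u\neq 0$, as $\mathcal{T}0=0$). Note also that $(\widetilde H_5)$ guarantees $J_2\neq\emptyset$, hence $c$ in \eqref{e-c-def} is well defined and the chain of strict inclusions $K_\rho\subsetneq V_\rho\subsetneq K_{\rho/c}$ holds; in particular each $V_\rho$ is bounded, so all indices below are legitimate.

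First I would record the two index computations in the form we need. By Lemma \ref{L:index1_fp}, condition $(I^1_\rho)$ gives $i_K(\mathcal{T},K_\rho)=1$; by Lemma \ref{L:index0-fpi}, condition $(I^0_\rho)$ gives $i_K(\mathcal{T},V_\rho)=0$. The key geometric observation, which makes the hypotheses on the $\rho$'s exactly what is needed, is that whenever $\rho/c<\rho'$ we have $\overline{V_\rho}\subset K_{\rho/c}\subset K_{\rho'}$, and whenever $\rho<\rho'$ we have $\overline{K_\rho}\subset K_\rho\cup\partial K_\rho\subset V_{\rho'}$ (using $K_{\rho'}\subset V_{\rho'}$ and that $\|u\|<\rho'$ is implied by $\|u\|\le\rho$). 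So nesting an ``index $1$'' set strictly inside an ``index $0$'' set, or vice versa, is precisely governed by the inequalities listed in (C1)--(C6).

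With this in hand the six cases are immediate applications of part (3) of the index lemma. For (C1): $i_K(\mathcal{T},V_{\rho_1})=0$ and $i_K(\mathcal{T},K_{\rho_2})=1$ with $\overline{V_{\rho_1}}\subset K_{\rho_2}$ (since $\rho_1/c<\rho_2$), so $\mathcal{T}$ has a fixed point in $K_{\rho_2}\setminus\overline{V_{\rho_1}}$, which is nontrivial because it lies outside $V_{\rho_1}\ni 0$. Case (C2) is the mirror image: $i_K(\mathcal{T},K_{\rho_1})=1$, $i_K(\mathcal{T},V_{\rho_2})=0$, $\overline{K_{\rho_1}}\subset V_{\rho_2}$ (since $\rho_1<\rho_2$), giving a fixed point in $V_{\rho_2}\setminus\overline{K_{\rho_1}}$. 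For the two-solution cases (C3) and (C4) one applies part (3) twice on the two consecutive pairs of nested sets; the middle set's index ($1$ in (C3), $0$ in (C4)) is used once as the ``outer'' set and once as the ``inner'' set, and the two resulting fixed points lie in disjoint shells, hence are distinct and nontrivial. Cases (C5) and (C6) are the same argument with three consecutive pairs, yielding three fixed points in three pairwise disjoint shells.

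The only genuine point requiring care — the ``main obstacle'' — is the well-definedness of $i_K(\mathcal{T},V_\rho)$: one must know $V_\rho$ is bounded (which follows from $V_\rho\subset K_{\rho/c}$, itself a consequence of $(\widetilde H_5)$ via $c>0$) and that $\mathcal{T}$ is fixed-point-free on $\partial V_\rho$ and $\partial K_\rho$, which is exactly what the proofs of Lemmas \ref{L:index1_fp} and \ref{L:index0-fpi} establish under $(I^1_\rho)$ and $(I^0_\rho)$ respectively. Everything else is bookkeeping of the inequalities among $\rho_1,\rho_2,\rho_3,\rho_4$ and $c$, and the observation that fixed points in distinct shells $K_{\rho_{k+1}}\setminus\overline{V_{\rho_k}}$ (resp. $V_{\rho_{k+1}}\setminus\overline{K_{\rho_k}}$) are automatically distinct.
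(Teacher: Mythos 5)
Your proposal is correct and follows exactly the route the paper intends: the paper itself dispenses with the proof in one line (``an immediate consequence of the properties of the fixed point index''), relying on Lemmas \ref{L:index1_fp} and \ref{L:index0-fpi} together with part (3) of the index lemma and the inclusions $K_{\rho}\subsetneq V_{\rho}\subsetneq K_{\rho/c}$, which is precisely the bookkeeping you carry out. Your version simply makes explicit the details the authors left implicit (well-definedness of the indices, nontriviality, and disjointness of the shells), all of which check out.
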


The proof of the previous result is an immediate consequence of the
properties of the fixed point index. Moreover, it must be point out that,
despite of the fact that the previous theorem studies the existence of one,
two or three solutions, similar results can be formulated to ensure the
existence of four or more solutions.

\section{Examples}

\begin{example}
Consider the following boundary value problem: 
\begin{equation}  \label{ex1}
\left\{%
\begin{split}
& u^{(3)}(t)=\lambda\,\frac{e^t\left(|u(t)|+|u^{\prime }(t)|+|u^{\prime
\prime }(t)|\right)}{1+(u(t))^2}, \quad t\in [0,1], \\[2pt]
& u(0)=-u(1), \ u^{\prime }(0)=\frac{1}{2}\,u^{\prime }(1), \ u^{\prime
\prime }(0)=0.
\end{split}%
\right.
\end{equation}

The Green's function related to the homogeneous problem 
\begin{equation*}
\left\{%
\begin{split}
& u^{(3)}(t)=0, \quad t\in [0,1], \\
& u(0)=-u(1), \ u^{\prime }(0)=\frac{1}{2}\,u^{\prime }(1), \ u^{\prime
\prime }(0)=0,
\end{split}%
\right.
\end{equation*}
is the following one 
\begin{equation*}
G(t,s)=\left\{%
\begin{array}{ll}
\frac{1}{4}\,(1-s)\,(-3+s+4\,t), & \ t\le s, \\[4pt] 
\frac{1}{4}\,(-3+s\,(s+4)+2\,t\,(t+2)-8\,s\,t), & \ s<t.%
\end{array}
\right.
\end{equation*}
Therefore, solutions of the boundary value problem \eqref{ex1} correspond
with the fixed points of the following operator: 
\begin{equation*}
{\mathcal{T}}u(t)= \lambda \,\int_{0}^{1}G(t,s)\,f(s,u(s),u^{\prime
}(s),u^{\prime \prime }(s))\,\dif s,\quad t\in \lbrack 0,1],
\end{equation*}
which is a particular case of the operator defined in \eqref{e-Int} for $T=1$%
, $m=2$, $k\equiv G$ and $f(t,x,y,z)=\frac{e^t\left(|x|+|y|+|z|\right)}{1+x^2%
}$. We will check now that the kernel $G$ satisfies conditions $(H_1)-(H_5)$%
. To do that, we need to calculate the explicit expression of the first and
second partial derivatives of the Green's function, that is, 
\begin{equation*}
\frac{\partial\, G}{\partial \,t}(t,s)= \left\{%
\begin{array}{ll}
1-s, & \ t\le s, \\[4pt] 
1-2\,s+t, & \ s<t,%
\end{array}
\right.
\end{equation*}
and 
\begin{equation*}
\frac{\partial^2\, G}{\partial\, t^2}(t,s)=\left\{%
\begin{array}{ll}
0, & \ t< s, \\[4pt] 
1, & \ s<t.%
\end{array}
\right.
\end{equation*}
Using this expressions, we are able to check that the required conditions
hold:

\begin{itemize}
\item[$(H_1)$] Let $\tau \in I$ be fixed. Both $G$ and $\frac{\partial\, G}{\partial\,t}$ are uniformly continuous so the hypothesis is immediate for $i=0,1$. Moreover, for the second derivative $\frac{\partial^2\, G}{\partial\,t^2}$ (that is, for the case $i=m=2$), we can take $Z_\tau=\{\tau\}$ and we have that
\[\left|\frac{\partial^2\, G}{\partial\,t^2}(t,s)-\frac{\partial^2\, G}{\partial\,t^2}(\tau,s)\right|=|1-1|=0, \ \forall\, s<\min\{t,\tau\} \]
and 
\[\left|\frac{\partial^2\, G}{\partial\,t^2}(t,s)-\frac{\partial^2\, G}{\partial\,t^2}(\tau,s)\right|=|0-0|=0, \ \forall\, s>\max\{t,\tau\}, \]
so the hypothesis hold.
%

\item[$(H_2)$] Numerically, it can be seen that 
\begin{equation*}
G(t,s)\ge 0, \ \text{ for all } t\in[t_0,1], \ s\in[0,1],
\end{equation*}
with $t_0\approx 0.6133$. Therefore, in this case $[m_0,n_0]=[t_0,1]$.

Moreover, both $\frac{\partial\,G}{\partial\,t}$ and $\frac{\partial^2\,G}{%
\partial\,t^2}$ are nonnegative on the square $[0,1]\times [0,1]$, which
means that $[m_1,n_1]=[m_2,n_2]=[0,1]$.

\item[$(H_3)$] It can be checked that 
\begin{equation*}
\left|G(t,s) \right|\le \frac{1}{4}\,(3-4\,s+s^2), \ \text{ for all } t\in[%
0,1], \ s\in[0,1],
\end{equation*}
and the equality holds for $t=0$ and $t=1$ so the choice $h_0(s)=\frac{1}{4}%
\,(3-4\,s+s^2)$ is optimal. This inequality can be easily proved by taking
into account that, since $\frac{\partial\, G}{\partial\,t}$ is nonnegative,
then $G(\cdot,s)$ is nondecreasing for every $s\in [0,1]$ and, therefore, 
\begin{equation*}
|G(t,s)|\le \max\{|G(0,s)|,\,|G(1,s)| \}= \frac{1}{4}\, (3-4\,s+s^2).
\end{equation*}

For the first derivative, it holds that 
\begin{equation*}
\left|\frac{\partial\,G}{\partial\,t}(t,s) \right|\le 2\,(1-s), \ \text{ for
all } t\in[0,1], \ s\in[0,1],
\end{equation*}
and the equality holds for $t=1$, so $h_1(s)=2\,(1-s)$ is also optimal.

Finally, 
\begin{equation*}
\left|\frac{\partial^2\,G}{\partial\,t^2}(t,s) \right|\le 1, \ \text{ for } t%
\in[0,1] \text{ and a.\,e. }s\in[0,1],
\end{equation*}
and $h_2(s)=1$ is trivially optimal.

\item[$(H_4)$] If we take $\phi_0(s)=h_0(s)=\frac{1}{4}\,(3-4\,s+s^2)$, $%
[c_0,d_0]=[0,1]$, and $[a_0,b_0]=[t_1,1]$ with $t_1>t_0$ ($t_0$ given in $%
(H_2)$), it holds that there exists a constant $\xi_0(t_1)\in (0,1)$ such
that 
\begin{equation*}
G(t,s)\ge \xi_0(t_1)\,\phi_0(s), \ \text{ for all } t\in[t_1,1], \ s\in[0,1].
\end{equation*}
We note that the bigger $t_1$ is, the bigger the constant $\xi_0(t_1)$ is.
For instance, if we take $t_1=0.62$, we can choose $\xi_0=\frac{1}{75}$.

With regard to the first derivative of $G$, it satisfies that 
\begin{equation*}
\frac{\partial\,G}{\partial\,t}(t,s)\le 2\,(1-s), \ \text{ for all } t\in[0,1%
], \ s\in[0,1],
\end{equation*}
that is, we could take $\phi_1(s)=h_1(s)=2(1-s)$, $[c_1,d_1]=[0,1]$, $\xi_1=%
\frac{1}{2}$ and $[a_1,b_1]=[0,1]$.

Finally, for the second derivative of $G$, it does not exist a suitable
function $\phi_2$ and a constant $\xi_2$ for which the inequalities in $%
(H_4) $ hold.

As a consequence, we deduce that $J_1=\{0,\,1\}$.

Moreover, it is obvious that $\int_{a_i}^{b_i}\phi_i(s)\,\dif s>0$ for $%
i=0,1 $.

\item[$(H_5)$] It is immediately deduced from the proofs of the previous
conditions.
\end{itemize}

Moreover, the nonlinearity $f$ satisfies condition $(H_6)$.

We will work in the cone 
\begin{equation*}
K=\left\{ 
\begin{split}
u\in \mathcal{C}^{2}([0,1],\mathbb{R}):\ & u(t)\geq 0,\,t\in \lbrack t_0,1],
\ \ u^{\prime }(t),u^{\prime \prime }(t)\geq 0,\,t\in \lbrack 0,1]; \\[3pt]
& \displaystyle\min_{t\in \left[ t_1,1\right] }u(t)\geq \xi_{0}(t_1)\, \Vert
u\Vert _{\left[ 0,1\right]}, \ \ \displaystyle\min_{t\in \left[0,1\right]
}u^{\prime }(t)\geq \frac{1}{2} \, \Vert u^{\prime }\Vert _{\left[ 0,1\right]
}
\end{split}
\right\}.
\end{equation*}

With the notation introduced in Section 3, we obtain the following values
for the constants involved in Theorem \ref{T:exist1_orden_n}: 
\begin{equation*}
\Lambda^0=\frac{1}{3}, \quad \Lambda^1=1, \quad \Lambda^2=1,
\end{equation*}
and therefore 
\begin{equation*}
\bar{\Lambda}=3\,\max\left\{\Lambda^0,\,\Lambda^1,\,\Lambda^2\right\} = 3,
\end{equation*}
\begin{equation*}
\Lambda_0=\xi_0(t_1)\left(\frac{1}{3}-\frac{3}{4}\,t_1 +\frac{1}{2}\, t_1^2-%
\frac{1}{12}\, t_1^3\right), \quad \Lambda_1=\frac{1}{2},
\end{equation*}
and so 
\begin{equation*}
\underline{\Lambda}=\max\left\{\xi_0^2(t_1)\left(\frac{1}{3}-\frac{3}{4}%
\,t_1 +\frac{1}{2}\, t_1^2-\frac{1}{12}\, t_1^3\right),\,\frac{1}{4}\right\}.
\end{equation*}
We note that, since $\xi_0(t_1)\in (0,1)$,
\[\xi_0^2(t_1)\left(\frac{1}{3}-\frac{3}{4}%
\,t_1 +\frac{1}{2}\, t_1^2-\frac{1}{12}\, t_1^3\right) < \frac{1}{3}-\frac{3}{4}%
\,t_1 +\frac{1}{2}\, t_1^2-\frac{1}{12}\, t_1^3 \]
and it is easy to see that the right hand side of previous inequality decreases with $t_1$ and, in particular, it is always smaller than $\frac{1}{4}$. Thus,
\[\underline{\Lambda}=\frac{1}{4}\]
independently of the value of $t_1$.

On the other hand, we obtain the following values for the limits over the
nonlinearity $f$: 
\begin{equation*}
f_{0}=\liminf\limits_{|x|,|y|,|z|\rightarrow 0}\,\min_{t\in \lbrack 0,1]}%
\frac{e^{t}\left( |x|+|y|+|z|\right) }{(1+x^{2})\left( |x|+|y|+|z|\right) }%
=\lim\limits_{|x|,|y|,|z|\rightarrow 0}\,\frac{1}{(1+x^{2})}=1,
\end{equation*}%
\begin{equation*}
f^{\infty }=\limsup\limits_{|x|,|y|,|z|\rightarrow \infty }\,\max_{t\in
\lbrack 0,1]}\frac{e^{t}\left( |x|+|y|+|z|\right) }{(1+x^{2})\left(
|x|+|y|+|z|\right) }=\lim\limits_{|x|,|y|,|z|\rightarrow \infty }\frac{e}{%
(1+x^{2})}=0.
\end{equation*}

Therefore, from Corollary \ref{cor_exist1}, we deduce that for all $%
\lambda\in \left(4,\infty\right)$, $\mathcal{T}$
has at least a fixed point in the cone $K$, with independence of the choice of $t_1$. This fixed point is a nontrivial solution of problem \eqref{ex1}.

On the other hand, we will prove that it is not possible to apply Theorem %
\ref{T:exist2_orden_n} to this example. With the notation introduced in
Lemma \ref{L:index0-fpi}, we have that 
\begin{equation*}
f_{\rho}^0=\inf\left\{ \frac{e^t(|x|+|y|+|z|)}{\rho\,(x^2+1)} : \ t\in[t_1,1]
, \ x\in\left[0,\frac{\rho}{\xi_0(t_1)}\right], \ y\in[0,2\rho], \ z\in[%
0,\rho] \right\}=0
\end{equation*}
and 
\begin{equation*}
f_{\rho}^1=\inf\left\{\frac{e^t(|x|+|y|+|z|)}{\rho\,(x^2+1)} : \ t\in[0,1],
\ x\in\left[0,\frac{\rho}{\xi_0(t_1)}\right], \ y\in[0,2\rho], \ z\in[0,\rho]%
\right\}=0,
\end{equation*}
and therefore it does not exist any $\rho$ such that condition $(I^0_{\rho})$
holds. Thus Theorem \ref{T:exist2_orden_n} is not applicable to this example.
\end{example}

\begin{example}
\label{ex:2_Dirichlet1}Consider now the following Lidstone fourth order
problem: 
\begin{equation}
\left\{ 
\begin{split}
& u^{(4)}(t)=\lambda \,t\,\left( e^{u(t)}+(u^{\prime }(t))^{2}+(u^{\prime
\prime }(t))^{2}+(u^{\prime \prime \prime }(t))^{2}\right) ,\quad t\in
\lbrack 0,1], \\[2pt]
& u(0)=u(1)=u^{\prime \prime }(0)=u^{\prime \prime }(1)=0.
\end{split}%
\right.   \label{ex:2_Dirichlet}
\end{equation}%
Notice that fourth order differential equations with this type of boundary
conditions have been applied for the study of the bending of simply
supported elastic beams (see \cite{M+TG+AS, Timo}) or suspension bridges
(see \cite{Drabek, Lazer}). \newline
The Green's function related to the homogeneous problem 
\begin{equation*}
\left\{ 
\begin{split}
& u^{(4)}(t)=0,\quad t\in \lbrack 0,1], \\[2pt]
& u(0)=u(1)=u^{\prime \prime }(0)=u^{\prime \prime }(1)=0.
\end{split}%
\right. 
\end{equation*}%
has the following expression: 
\begin{equation*}
G(t,s)=\frac{1}{6}\left\{ 
\begin{array}{ll}
t\,(1-s)\,(2\,s-s^{2}-t^{2}), & t\leq s, \\[4pt]
s\,(1-t)\,(2\,t-t^{2}-s^{2}), & s<t,%
\end{array}%
\right. 
\end{equation*}%
which implies that the solutions of problem \eqref{ex:2_Dirichlet} coincide
with the fixed points of 
\begin{equation*}
{\mathcal{T}}u(t)=\lambda \,\int_{0}^{1}G(t,s)\,f(s,u(s),u^{\prime
}(s),u^{\prime \prime }(s),u^{\prime \prime \prime }(s))\,\dif s,\quad t\in
\lbrack 0,1].
\end{equation*}%
Previous operator is a particular case of \eqref{e-Int} for $T=1$, $m=3$, $%
k\equiv G$ and $f(t,x,y,z,w)=t\,\left( e^{x}+y^{2}+z^{2}+w^{2}\right) $.

Next, we will give the explicit expressions of the first, second and third
derivatives of the Green's function:

\begin{equation*}
\frac{\partial\,G}{\partial\,t}(t,s)=\frac{1}{6}\left\{%
\begin{array}{ll}
-(1-s)\,(-2\,s+s^2+3\,t^2), & t\le s, \\[4pt] 
s\,(2+s^2+3\,t^2-6\,t), & s<t,%
\end{array}
\right.
\end{equation*}

\begin{equation*}
\frac{\partial^2\,G}{\partial\,t^2}(t,s)=\left\{%
\begin{array}{ll}
-t\,(1-s), & t\le s, \\[4pt] 
-s\,(1-t), & s<t,%
\end{array}
\right.
\end{equation*}

\begin{equation*}
\frac{\partial^3\,G}{\partial\,t^3}(t,s)=\left\{%
\begin{array}{ll}
-(1-s), & t< s, \\[4pt] 
s, & s<t,%
\end{array}
\right.
\end{equation*}
and now we will see that they satisfy the required hypotheses:

\begin{itemize}
\item[$(H_1)$] As in previous example, it is easy to verify that this condition holds.


\item[$(H_2)$] The Green's function $G$ is nonnegative on $[0,1]\times[0,1]$
(in fact it is positive on $(0,1)\times(0,1)$). Therefore $[m_0,n_0]=[0,1]$.

For first derivative it holds that 
\begin{equation*}
\frac{\partial\,G}{\partial\,t}(t,s)\ge 0 \quad \text{for all } t\in[0,t_2],
\ s\in[0,1],
\end{equation*}
with $t_2=1-\frac{\sqrt{3}}{3}\approx 0.42265$. Thus $[m_1,n_1]=[0,t_2]$.

With respect to the second derivative, it is immediate to see that it is
nonpositive on its square of definition. However it is zero on the boundary
of the square, so we could take $[m_2,n_2]=\{0\}$ (it would also be possible
to choose $[m_2,n_2]=\{1\}$).

Finally, the third derivative is nonnegative on the triangle 
\begin{equation*}
\{(t,s)\in[0,1]\times[0,1] : \ s<t\},
\end{equation*}
that is, $[m_3,n_3]=\{1\}$.

\item[$(H_{3})$] We have that 
\begin{equation*}
\left\vert G(t,s)\right\vert =G(t,s)\leq h_{0}(s)\quad \text{for all }t\in
\lbrack 0,1],\ s\in \lbrack 0,1],
\end{equation*}%
where 
\begin{equation*}
h_{0}(s)=\frac{1}{9\,\sqrt{3}}\left\{ 
\begin{array}{ll}
s\,(1-s^{2})^{\frac{3}{2}}, & 0\leq s\leq \frac{1}{2}, \\[4pt] 
(1-s)\,(2\,s-s^{2})^{\frac{3}{2}}, & \frac{1}{2}<s\leq 1.%
\end{array}%
\right.
\end{equation*}%
Previous inequality has been proved in \cite{WIF}. 

%
%

%


Previous inequality is optimal in the sense that for each $s\in[0,1]$ there
exists at least one value of $t\in[0,1]$ for which the equality is satisfied.

Analogously, it holds that 
\begin{equation*}
\left|\frac{\partial\,G}{\partial\,t}(t,s)\right| \le h_1(s) \quad \text{for
all } t\in[0,1], \ s\in[0,1],
\end{equation*}
for 
\begin{equation*}
h_1(s)=\frac{1}{6}\,s\,(1-s)\left\{%
\begin{array}{ll}
2-s, & 0\le s\le \frac{1}{2}, \\[4pt] 
1+s, & \frac{1}{2} <s\le 1,%
\end{array}
\right.
\end{equation*}
and the equality holds for $0\le s\le \frac{1}{2}$ at $t=0$ and for $\frac{1}{2} <s\le 1$ at $t=1$, so this choice of $h_1$ is optimal. 

For the second derivative, we have that 
\begin{equation*}
\left|\frac{\partial^2\,G}{\partial\,t^2}(t,s)\right| \le s(1-s)\equiv
h_2(s) \quad \text{for all } t\in[0,1], \ s\in[0,1],
\end{equation*}
and the inequality is optimal in the same way than for the Green's function $%
G$.

With regard to the third derivative, it satisfies that 
\begin{equation*}
\left|\frac{\partial^3\,G}{\partial\,t^3}(t,s)\right| \le
\max\{s,1-s\}\equiv h_3(s) \quad \text{for } t\in[0,1] \text{ and a.\,e. } s%
\in[0,1],
\end{equation*}
and the inequality is also optimal.

\item[$(H_4)$] If we choose $\phi_0(s)=h_0(s)$, given in $(H_3)$, and $%
[c_0,d_0]=[0,1]$, then for any closed interval $[a_0,b_0]\subset (0,1)$, it
is possible to find a constant $\xi_0(a_0,b_0)\in (0,1)$ such that 
\begin{equation*}
G(t,s)\ge \xi_0(a_0,b_0)\,\phi_0(s), \ \text{ for all } t\in[a_0,b_0], \ s\in%
[0,1].
\end{equation*}
This has been proved in \cite{WIF} with an explicit function. Of course, it
is satisfied that the bigger the interval $[a_0,b_0]$ is, the smaller $%
\xi_0(a_0,b_0)$ needs to be.

Analogously, we can take $\phi_1(s)=h_1(s)$ and $[c_1,d_1]=[0,1]$ and it
holds that for any interval $[0,b_1]$, with $b_1< 1-\frac{\sqrt{3}}{3}$,
there exists $\xi_1(b_1)\in (0,1)$ such that 
\begin{equation*}
\frac{\partial\,G}{\partial\,t}(t,s)\ge \xi_1(b_1)\,\phi_1(s), \ \text{ for
all } t\in[0,b_1], \ s\in[0,1].
\end{equation*}

Finally, with respect to the second derivative of the Green's function $G$,
it does not exist any pair of function $\phi _{2}$ and constant $\xi _{2}$
such that the inequalities in $(H_{4})$ hold. The same occurs with the third
derivative of $G$. Therefore $J_{1}=\{0,\,1\}.$

\item[$(H_{5})$] It is a direct consequence of $(H_{2})$.

Clearly, $f$ satisfies $(H_{6})$ and $(H_{7})$.
\end{itemize}

As a consequence of the properties of the Green's function that we have just
seen, we will work in the cone 
\begin{equation*}
K=\left\{ 
\begin{split}
u\in \mathcal{C}^{3}([0,1],\mathbb{R}):\ & u(t)\geq 0,\,t\in \lbrack 0,1],\
\ u^{\prime }(t)\geq 0,\,t\in \lbrack 0,t_{2}], \\[4pt]
& u^{\prime \prime }(t)\geq 0,\ t\in \{0,1\},\ \ u^{\prime \prime \prime
}(1)\geq 0, \\[4pt]
& \displaystyle\min_{t\in \left[ a_{0},b_{0}\right] }u(t)\geq \xi
_{0}(a_{0},b_{0})\,\Vert u\Vert _{\left[ 0,1\right] }, \\[4pt]
& \displaystyle\min_{t\in \left[ 0,b_{1}\right] }u^{\prime }(t)\geq \xi
_{1}(b_{1})\,\Vert u^{\prime }\Vert _{\left[ 0,1\right] }
\end{split}%
\right\} .
\end{equation*}

Moreover, we will make all the calculations with the values $%
[a_{0},b_{0}]=[0.1,0.9]$, $\xi _{0}=\frac{1}{4}$, $[0,b_{1}]=\left[ 0,\frac{1%
}{3}\right] $ and $\xi _{1}=\frac{1}{6}$.

In this case, with the notation introduced in Lemma \ref{L:index1_fp}, we
have that 
\begin{equation*}
\frac{1}{N}=\max\left\{\frac{5}{384}, \, \frac{1}{24}, \, \frac{1}{8}, \, 
\frac{1}{2} \right\}=\frac{1}{2}
\end{equation*}
and 
\begin{equation*}
f^{\rho_2}=\sup\left\{\frac{t\left(e^x+y^2+z^2+w^2\right)}{\rho_2}: \ t\in[%
0,1], \ x,\,y,\,z,\,w \in [-\rho_2,\rho_2] \right\}= \frac{%
e^{\rho_2}+3\,\rho_2^2}{\rho_2},
\end{equation*}
and so $\left(I^1_{\rho_2}\right)$ holds for any $\lambda<\frac{2 \,\rho_2}{%
e^{\rho_2}+3\,\rho_2^2}$.

Analogously, with the notation used in Lemma \ref{L:index0-fpi}, 
\begin{equation*}
\frac{1}{M_0}=\frac{29}{7500}, \quad \frac{1}{M_1}=\frac{7}{1944},
\end{equation*}
\begin{equation*}
f_{\rho_1}^0=\inf\left\{\frac{t\left(e^x+y^2+z^2+w^2\right)}{\rho_1} : \ t\in%
[0.1,0.9], \ x\in\left[0,4\,\rho_1\right], \ y\in[0,6\,\rho_1], \ z,\,w\in[%
0,\rho_1] \right\}= \frac{0.1}{\rho_1}
\end{equation*}
and 
\begin{equation*}
f_{\rho}^1=\inf \left\{\frac{t\left(e^x+y^2+z^2+w^2\right)}{\rho_1} : \ t\in%
\left[0,\frac{1}{3}\right], \ x\in\left[0,4\,\rho_1\right], \ y\in[%
0,6\,\rho_1], \ z,\,w\in[0,\rho_1] \right\}=0,
\end{equation*}
and thus $\left(I^0_{\rho_1}\right)$ holds for $\lambda>\frac{75000 \,\rho_1%
}{29}$.

Therefore, as a consequence of $(C_1)$ in Theorem \ref{T:exist2_orden_n},
for any pair of values $\rho_1,\,\rho_2>0$ such that $\rho_1<c\,\rho_2=\frac{%
\rho_2}{6}$ and 
\begin{equation*}
\frac{75000 \,\rho_1}{29}<\frac{2 \,\rho_2}{e^{\rho_2}+3\,\rho_2^2},
\end{equation*}
problem \eqref{ex:2_Dirichlet} has at least a nontrivial solution for all 
\begin{equation*}
\lambda\in \left(\frac{75000 \,\rho_1}{29}, \, \frac{2 \,\rho_2}{%
e^{\rho_2}+3\,\rho_2^2}\right).
\end{equation*}
In particular, there exists at least a nontrivial solution of %
\eqref{ex:2_Dirichlet} for all 
\begin{equation*}
\lambda\in \left(0, \,0.4171\right).
\end{equation*}

\vspace*{2pt}

On the other hand, we obtain that: 
\begin{equation*}
f_{0}=\liminf\limits_{|x|,|y|,|z|,|w|\rightarrow 0}\,\min_{t\in \lbrack 0,1]}%
\frac{t\left( e^{x}+y^{2}+z^{2}+w^{2}\right) }{|x|+|y|+|z|+|w|}=0,
\end{equation*}%
and thus neither Theorem \ref{T:exist1_orden_n} nor Corollary \ref%
{cor_exist1} can be applied to this example.
\end{example}

\section{Application to some $2n$-$th$ order problems}

In this section we contribute to fill some gaps on the study of general $2n$-%
$th$ order Lidstone boundary value problems, for $n\geq 1,$ usually the
nonlinearities may depend only on the even derivatives (see, for example, 
\cite{Davis, Marius, Wang, Z+L}), or general complementary Lidstone
problems, (see \cite{Wong} and the references therein). Therefore, we
consider the following problem, with a full nonlinearity, 
\begin{equation}
\left\{ 
\begin{split}
& u^{(2n)}(t)=f\left( t,u(t),\dots ,u^{(2n-1)}(t)\right) ,\quad t\in \lbrack
0,1], \\[2pt]
& u^{(2k)}(0)=u^{(2k)}(1)=0,\ \ k=0,\dots ,n-1.
\end{split}%
\right.  \label{ex:order_n}
\end{equation}

Let $G(t,s)$ be the Green's function related to the homogeneous problem 
\begin{equation*}
\left\{ 
\begin{split}
& u^{(2n)}(t)=0,\quad t\in \lbrack 0,1], \\[2pt]
& u^{(2k)}(0)=u^{(2k)}(1)=0,\ \ k=0,\dots ,n-1.
\end{split}%
\right.
\end{equation*}

It can be checked that, for $n\ge 2$, $g(t,s)=\frac{\partial ^{2n-4}\,G}{%
\partial \,t^{2n-4}}(t,s)$ is the Green's function related to the problem 
\begin{equation*}
\left\{%
\begin{split}
& u^{(4)}(t)=0,\quad t\in \lbrack 0,1], \\[2pt]
& u(0)=u(1)=u^{\prime \prime }(0)=u^{\prime \prime }(1)=0,
\end{split}%
\right.
\end{equation*}
whose explicit expression has been calculated in Example \ref%
{ex:2_Dirichlet1}. As a consequence of the calculations made in that example
we know that the following facts hold, for $n\ge 2$:

\begin{itemize}
\item $\frac{\partial^{2n-4} \, G}{\partial \, t^{2n-4}}(t,s)=g(t,s) \ge 0$
on $[0,1]\times [0,1]$ and $\frac{\partial^{2n-4} \, G}{\partial \, t^{2n-4}}%
(t,s)=0$ on the boundary of the square.

\item $\frac{\partial^{2n-3} \, G}{\partial \, t^{2n-3}}(t,s)=\frac{%
\partial\,g}{\partial\,t}(t,s) \ge 0$ on $[0,t_2]\times [0,1]$, with $t_2=1-%
\frac{\sqrt{3}}{3}$.

\item $\frac{\partial^{2n-2} \, G}{\partial \, t^{2n-2}}(t,s)=\frac{%
\partial^2\,g}{\partial\,t^2}(t,s) \le 0$ on $[0,1]\times [0,1]$, and $\frac{%
\partial^{2n-2} \, G}{\partial \, t^{2n-2}}(t,s)=0$ on the boundary of the
square.

\item $\frac{\partial^{2n-1} \, G}{\partial \, t^{2n-1}}(t,s)=\frac{%
\partial^3\,g}{\partial\,t^3}(1,s) \ge 0$ for $s\in[0,1]$.
\end{itemize}

With this information, we can obtain some results about the constant sign
both of the derivatives of smaller order of $G$ and of the Green's function
itself.

\begin{enumerate}
\item Since $\frac{\partial ^{2n-4}\,G}{\partial \,t^{2n-4}}(t,s)\geq 0$,
for $n\geq 3$, it holds that for each fixed $s\in \lbrack 0,1]$, $\frac{%
\partial ^{2n-5}\,G}{\partial \,t^{2n-5}}(\cdot ,s)$ is increasing.

Assume that it is nonnegative. Then it would occur that $\frac{\partial
^{2n-6}\,G}{\partial \,t^{2n-6}}(\cdot ,s)$ is also increasing and, since
from the boundary value conditions it holds that $\frac{\partial ^{2n-6}\,G}{%
\partial \,t^{2n-6}}(0,s)=\frac{\partial ^{2n-6}\,G}{\partial \,t^{2n-6}}%
(1,s)=0$, we would conclude that $\frac{\partial ^{2n-6}\,G}{\partial
\,t^{2n-6}}(t,s)=0$ on $[0,1]\times \lbrack 0,1]$, which is not possible.

The same argument holds if we assume that $\frac{\partial^{2n-5} \, G}{%
\partial \, t^{2n-5}}(\cdot,s)$ is nonpositive.

Therefore, necessarily $\frac{\partial^{2n-5} \, G}{\partial \, t^{2n-5}}%
(\cdot,s)$ is sign-changing and, since it is increasing, we know for sure
that $\frac{\partial^{2n-5} \, G}{\partial \, t^{2n-5}}(0,s)<0$ and $\frac{%
\partial^{2n-5} \, G}{\partial \, t^{2n-5}}(1,s)>0$ for all $s\in [0,1]$.

\item Now, since $\frac{\partial^{2n-5} \, G}{\partial \, t^{2n-5}}(\cdot,s)$
is sign-changing and increasing, $\frac{\partial^{2n-6} \, G}{\partial \,
t^{2n-6}}(\cdot,s)$ will be first decreasing and then increasing. This
together with the boundary value conditions $\frac{\partial^{2n-6} \, G}{%
\partial \, t^{2n-6}}(0,s)=\frac{\partial^{2n-6} \, G}{\partial \, t^{2n-6}}%
(1,s)=0$ implies that $\frac{\partial^{2n-6} \, G}{\partial \, t^{2n-6}}$ is
nonpositive.

\item Since $\frac{\partial ^{2n-6}\,G}{\partial \,t^{2n-6}}$ is
nonpositive, we can follow an analogous argument to the one made in 1. to
deduce that $\frac{\partial ^{2n-7}\,G}{\partial \,t^{2n-7}}$ is
sign-changing and decreasing. In particular this implies that $\frac{%
\partial ^{2n-7}\,G}{\partial \,t^{2n-7}}(0,s)>0$ and $\frac{\partial
^{2n-7}\,G}{\partial \,t^{2n-7}}(1,s)<0$ for all $s\in \lbrack 0,1]$ and $%
n\geq 4$.

\item Finally, arguing analogously to 2., we can deduce that $\frac{\partial
^{2n-8}\,G}{\partial \,t^{2n-8}}$ is nonnegative on $[0,1]\times \lbrack
0,1],$ for $n\geq 4$.
\end{enumerate}

We note that we could repeat all the previous arguments iteratively and this
way we could deduce the following sign-criteria for the derivatives of $G$.
So, for $n\geq \frac{k}{2}:$

\begin{itemize}
\item If $k\equiv 0\left( \mod{4}\right) $, then $\frac{\partial ^{2n-k}\,G}{%
\partial \,t^{2n-k}}(t,s)\geq 0$ on $[0,1]\times \lbrack 0,1]$.

\item If $k\equiv 1\left( \mod{4}\right) $, then $\frac{\partial ^{2n-k}\,G}{%
\partial \,t^{2n-k}}(\cdot ,s)$ is sign-changing and increasing for every $%
s\in \lbrack 0,1]$. In particular, $\frac{\partial ^{2n-k}\,G}{\partial
\,t^{2n-k}}(0,s)<0$ and $\frac{\partial ^{2n-k}\,G}{\partial \,t^{2n-k}}%
(1,s)>0$ for every $s\in \lbrack 0,1]$.

\item If $k\equiv 2\left( \mod{4}\right) $, then $\frac{\partial ^{2n-k}\,G}{%
\partial \,t^{2n-k}}(t,s)\leq 0$ on $[0,1]\times \lbrack 0,1]$.

\item If $k\equiv 3\left( \mod{4}\right) $, then $\frac{\partial ^{2n-k}\,G}{%
\partial \,t^{2n-k}}(\cdot ,s)$ is sign-changing and decreasing for every $%
s\in \lbrack 0,1]$. In particular, $\frac{\partial ^{2n-k}\,G}{\partial
\,t^{2n-k}}(0,s)>0$ and $\frac{\partial ^{2n-k}\,G}{\partial \,t^{2n-k}}%
(1,s)<0$ for every $s\in \lbrack 0,1]$.
\end{itemize}

In particular, if $n$ is even, we could deduce that $G(t,s)\geq 0$ on $%
[0,1]\times \lbrack 0,1]$ and, for $n$ odd, $G(t,s)\leq 0$ on $[0,1]\times
\lbrack 0,1]$.

Therefore, the Green's function and its derivatives satisfy the required
hypotheses:

\begin{itemize}
\item[$(H_{1})$] As in Example \ref{ex:2_Dirichlet1}, this condition holds
as a direct consequence of the general properties of the Green's function.

\item[$(H_{2})$] As we have just proved, we could take $%
[m_{2n-i},n_{2n-i}]=[0,1]$ for $i\equiv 0\left( \mod{4}\right) $, $%
[m_{2n-i},n_{2n-i}]=\{1\}$ for $i\equiv 1\left( \mod{4}\right) $, $%
[m_{2n-i},n_{2n-i}]=\{0\}$ for $i\equiv 2\left( \mod{4}\right) $ and $%
[m_{2n-i},n_{2n-i}]=\{0\}$ for $i\equiv 3\left( \mod{4}\right) $.

\item[$(H_{3})$] It is enough to take $h_{i}(s)=\max \left\{ \left\vert 
\frac{\partial ^{i}\,G}{\partial \,t^{i}}(t,s)\right\vert :\ t\in \lbrack
0,1]\right\} ,$ for $i\in J$.

\item[$(H_{4})$] For $n\geq 2,$ we could take $J_{1}=\{2n-4,2n-3\}$. As a
consequence of Example \ref{ex:2_Dirichlet1}, we know that 
\begin{equation*}
\left\vert \frac{\partial ^{2n-4}\,G}{\partial \,t^{2n-4}}(t,s)\right\vert =%
\frac{\partial ^{2n-4}\,G}{\partial \,t^{2n-4}}(t,s)=g(t,s)\leq \phi
_{2n-4}(s),
\end{equation*}%
with 
\begin{equation*}
\phi _{2n-4}(s)=\frac{1}{9\,\sqrt{3}}\left\{ 
\begin{array}{ll}
s\,(1-s^{2})^{\frac{3}{2}}, & 0\leq s\leq \frac{1}{2}, \\[4pt] 
(1-s)\,(2\,s-s^{2})^{\frac{3}{2}}, & \frac{1}{2}<s\leq 1.%
\end{array}%
\right.
\end{equation*}%
Moreover, it holds that for any closed interval $[a_{2n-4},b_{2n-4}]\subset
\lbrack 0,1]$, there exists a constant $\xi _{2n-4}(a_{2n-4},b_{2n-4})\in
(0,1)$ such that 
\begin{equation*}
\frac{\partial ^{2n-4}\,G}{\partial \,t^{2n-4}}(t,s)\geq \xi
_{2n-4}(a_{2n-4},b_{2n-4})\,\phi _{2n-4}(s),\ \text{ for all }t\in \lbrack
a_{2n-4},b_{2n-4}],\ s\in \lbrack 0,1].
\end{equation*}

Analogously, from Example \ref{ex:2_Dirichlet1} we know that 
\begin{equation*}
\left\vert \frac{\partial ^{2n-3}\,G}{\partial \,t^{2n-3}}(t,s)\right\vert
=\left\vert \frac{\partial \,g}{\partial \,t}(t,s)\right\vert \leq \phi
_{2n-3}(s)=\frac{1}{6}\,s\,(1-s)\left\{ 
\begin{array}{ll}
2-s, & 0\leq s\leq \frac{1}{2}, \\[4pt] 
1+s, & \frac{1}{2}<s\leq 1,%
\end{array}%
\right.
\end{equation*}%
for all $t\in \lbrack 0,1]$, $s\in \lbrack 0,1]$ and for any interval $%
[0,b_{2n-3}]$, with $b_{2n-3}<1-\frac{\sqrt{3}}{3}$ there exists $\xi
_{2n-3}(b_{2n_{3}})\in (0,1)$ such that 
\begin{equation*}
\frac{\partial ^{2n-3}\,G}{\partial \,t^{2n-3}}(t,s)\geq \xi
_{2n-3}(b_{2n_{3}})\,\phi _{2n-3}(s),\ \text{ for all }t\in \lbrack
0,b_{2n-3}],\ s\in \lbrack 0,1].
\end{equation*}

\item[$(H_5)$] As we have already seen, it holds that $%
[m_{2n-4},n_{2n-4}]=[0,1]$.
\end{itemize}

Then, we could work in the cone, for $n\in 
\mathbb{N}
$ such that $n\geq \max \left\{ 2,\frac{i}{2}\right\} ,$ 
\begin{equation*}
K=\left\{ 
\begin{split}
u\in \mathcal{C}^{2n-1}([0,1],\mathbb{R}):\ & u^{(2n-i)}(t)\geq 0,\,t\in
\lbrack 0,1],\ i\equiv 0\mod{4}, \\[4pt]
& u^{(2n-i)}(1)\geq 0,\ i\equiv 1\mod{4}, \\[4pt]
& u^{(2n-i)}(0)\geq 0,\ i\equiv 2\mod{4}, \\[4pt]
& u^{(2n-i)}(0)\geq 0,\ i\equiv 3\mod{4} \\[4pt]
& \displaystyle\min_{t\in \left[ a_{2n-4},b_{2n-4}\right] }u^{(2n-4)}(t)\geq
\xi _{2n-4}(a_{2n-4},b_{2n-4})\,\Vert u^{(2n-4)}\Vert _{\left[ 0,1\right] },
\\[4pt]
& \displaystyle\min_{t\in \left[ 0,b_{2n-3}\right] }u^{(2n-3)}(t)\geq \xi
_{2n-3}(b_{2n-3})\,\Vert u^{(2n-3)}\Vert _{\left[ 0,1\right] }
\end{split}%
\right\} .
\end{equation*}%
Thus, for any nonlinearity $f$ satisfying $(H_{6})$ and either conditions of
Theorem \ref{T:exist1_orden_n} or of Theorem \ref{T:exist2_orden_n}, it is
possible to find nontrivial solution of problem \eqref{ex:order_n}.

\section*{Acknowledgement}

This paper was mostly written during a stay of Luc\'{\i}a L\'{o}pez-Somoza
in \'{E}vora. Luc\'{\i}a L\'{o}pez-Somoza would like to acknowledge her
gratitude towards the Department of Mathematics of the University of \'{E}%
vora, and specially towards Professor Feliz Minh\'{o}s for his kindness and
hospitality.

\end{document}